\theoremstyle{plain}
\newtheorem{thm}[subsection]{Theorem}
\newtheorem{prop}[subsection]{Proposition}
\newtheorem{lem}[subsection]{Lemma}
\newtheorem{clm}[subsubsection]{Claim}
\theoremstyle{definition}
\newtheorem{defn}[subsection]{Definition}
\theoremstyle{remark}
\newtheorem{rmk}[subsection]{Remark}
\newtheorem{case}{Case}
\newcommand{\nc}{\newcommand}
\nc{\dmo}{\DeclareMathOperator}
\nc{\B}[1]{\mathbb{#1}}
\nc{\C}[1]{\mathcal{#1}}
\nc{\Sc}[1]{\mathscr{#1}}
\dmo{\sbld}{\preceq_s}
\dmo{\bld}{\preceq}
\dmo{\bldneq}{\prec}
\dmo{\sbldneq}{\prec_s}
\nc{\lb}{\llbracket}
\nc{\rb}{\rrbracket}
\dmo{\Id}{Id}
\newcommand{\R}{\mathbb{R}}
\newcommand{\Z}{\mathbb{Z}}
\newcommand{\N}{\mathbb{N}}
\renewcommand{\mod}[1]{{\ifmmode\text{\rm\ (mod~$#1$)}\else\discretionary{}{}{\hbox{ }}\rm(mod~$#1$)\fi}}
\begin{document}
\title[Partially Bounded Transformations]{Partially Bounded Transformations have trivial centralizers}
\date{\today}
\author[Gaebler]{Johann Gaebler}
\address[Johann Gaebler]{Harvard University, Cambridge, MA 02138, USA}
\email{jgaebler@college.harvard.edu}

\author[Kastner]{Alexander Kastner}
\address[Alexander Kastner ]{ \\
     Williams College \\ Williamstown, MA 01267, USA}
\email{ ask2@williams.edu}

\author[Silva]{Cesar E. Silva}
\address[Cesar E. Silva]{Department of Mathematics\\
     Williams College \\ Williamstown, MA 01267, USA}
\email{csilva@williams.edu}

\author[Xu]{Xiaoyu Xu}
\address[Xiaoyu Xu]{Princeton University\\ Princeton, NJ 08544, USA}
\email{xiaoyux@princeton.edu}

\author[Zhou]{Zirui Zhou}
\address[Zirui Zhou]{University of California, Berkeley, Berkeley, CA 94720, USA}
\email{zirui\_zhou@berkeley.edu}

\subjclass[2010]{Primary 37A40; Secondary
37A05, 
37A50} 
\keywords{Infinite measure-preserving, ergodic, rank-one, centralizer}

\maketitle

\date{\today}

\begin{abstract}
	We prove that for infinite rank-one transformations satisfying a  property called ``partial boundedness,'' the only commuting transformations are powers of the original transformation. This shows that a large class of infinite measure-preserving rank-one transformations with bounded cuts have trivial centralizers. We also characterize when partially bounded transformations are isomorphic to their inverse. \end{abstract}

\section{Introduction}
Given an ergodic  measure-preserving transformation there has been interest in understanding the set of transformations that commute with it. For example, it is well known that if $T$ is an irrational rotation and $T\circ S=S\circ T$ a.e., then $S$ must also be a rotation. In \cite{Or72}, Ornstein constructed mixing finite measure-preserving transformations that commute only with their powers; these were the first examples of  mixing transformations with no roots. Later, del Junco \cite{dJ79} proved that the well known Chac\'on transformation commutes only with its powers. All these examples are rank-one transformations, a  rich class of transformations for the construction of examples and counterexamples. In \cite{Ki86}, King showed that if $T$ is a rank-one finite measure-preserving transformation and $S$ commutes with $T$, then $S$ must be a limit of powers of $T$ in the weak topology. This theorem, known as the Weak Closure Theorem, has many interesting consequences, and has also been shown for finite measure-preserving rank-one flows \cite{JRR12}. It is known not to hold for rank-one finite measure-preserving  $\mathbb Z^2$ actions \cite{DoKw02}, \cite{DoSe09}, or rank-\(k\) actions for \(k>1\) \cite{KwLa02}. It  remains open for infinite measure-preserving rank-one transformations.

In the case of infinite measure-preserving, or nonsingular, transformations some results are known. In \cite{AaNa87}, Aaronson and Nakada constructed nonsingular group rotations with an equivalent ergodic infinite invariant measure that  commute only with their powers (having no measure-preserving factors though they have non-\(\sigma\)-finite factors); these examples have non-ergodic Cartesian square and  include the Hajian-Kakutani example \cite{HaKa70}. In \cite{RuSi89}, Rudolph and Silva constructed nonsingular transformations satisfying the property of rational minimal self-joinings, yielding nonsingular transformations of all Krieger types, and in particular rank-one infinite measure-preserving transformations commuting only with their powers (and having no nontrivial factors); these examples have ergodic Cartesian square. More recently,  Ryzhikov and Thouvenot \cite{RyTh15} have shown that infinite rank-one transformations that are Koopman mixing (or zero type) commute only with their powers. In \cite{JRR}, Janvresse, de la Rue, and Roy, prove that the infinite Chac\'on transformation of \cite{AdFrSi97} commutes only with its powers; this transformation is known to have infinite ergodic index \cite{AdFrSi97}.

In this paper we define the class of partially bounded rank-one transformations and show that each transformation in this class commutes only with its powers (in this case we say that the transformation has \emph{trivial centralizer}). Partial boundedness can only occur in infinite measure; it defines a broad class that includes transformations such as the Hajian-Kakutani transformation and the infinite Chac\'on transformation. Our methods are different from those in \cite{AaNa87}, \cite{RuSi89}, \cite{RyTh15} which use joining arguments; further it can be shown that partially bounded transformations are partially rigid and hence not Koopman mixing. We instead work with symbolic properties of the transformation as started in del Junco \cite{dJ79}. In fact we extend to infinite measure the methods of Gao and Hill \cite{GaHi16b} (see also Gao-Hill \cite{GaHi16}), who showed that canonically bounded finite measure-preserving rank-one transformations have trivial centralizers.

We also characterize the partially bounded transformations that are isomorphic to their inverse. As is well known, the isomorphism question for classes of transformations in ergodic theory has a long history
(see \cite{FoRuWe11}). Foreman, Rudolph and Weiss in \cite{FoRuWe11} give an argument that the isomorphism
problem for rank-one transformations is tractable. More recently,   in \cite{Hi16}, Hill characterizes the canonically bounded rank-one transformations that are isomorphic to their inverses. In the last section we extend the methods of \cite{Hi16} to partially bounded (infinite measure-preserving) transformations  to consider the 
inverse isomorphism problem.

Section~\ref{S:rank1} recalls the definition of rank-one transformations and symbolic systems using rank-one words. We introduce  partially bounded transformations and in Lemma~\ref{lemParBouUni} we describe the \(T\)--\(P\) names of these transformations.  In Section~\ref{S:ex} we show how a large class of examples can be rewritten to satisfy the partially bounded definition. 
In Proposition~\ref{propRigUncCen} we note that as in the finite measure-preserving case, rigid transformation have uncountable centralizer.
Section~\ref{S:prelim} develops in more detail properties of symbolic representations. In Theorem~\ref{thmMain} we show that partially bounded rank-one transformations have trivial centralizers. Rigid,  rank-one,  infinite measure-preserving transformations  are generic by \cite{AgSi02,BSSSW15}, and by  Proposition~\ref{propRigUncCen} they have uncountable centralizer. Section~\ref{S:isoinv}  gives a theorem that characterizes when a partially bounded transformation is isomorphic to its inverse. 

\textbf{Acknowledgments:} This paper is based on research done in the ergodic theory group of the 2016 SMALL   research project at Williams College. Support for the project was provided by National Science Foundation grant DMS-1347804, the  Science Center of Williams College, and the Williams College Finnerty Fund. We would like to thank  Madeleine Elyze,  Juan Ortiz Rhoton, and Vadim Semenov, the  other members of the SMALL 2016 ergodic theory group, for useful discussions and continuing support.  

\section{Rank-One Systems and \(T\)--\(P\) Names}\label{S:rank1}

\subsection{Notations}

In this paper, we shall consider measure-preserving transformations of an infinite, \(\sigma\)-finite  nonatomic Lebesgue measure space \((X,\Sc B,\lambda)\), e.g., \(\B R\) with Lebesgue measure. When we say that a transformation is invertible, that two sets are equal, and so on, there is always a tacit ``a.e.''

We shall make use of the following notations. We shall principally be concerned with words, i.e., finite or infinite sequences of some finite alphabet \(A\). We denote by \(\lb n,m\rb\) the finite subsequence of integers consisting of \(k\) such that \(n\leq k< m\), and, if \(x\) is some \(A\)-word, \(x\lb n,m\rb\) denotes the finite subword of length \(m-n\), indexed so that \(x\lb n,m\rb(0)=x(n)\), \(x\lb n,m\rb(1)=x(n+1)\), and so on. If \(u\) is a finite word, then \(|u|\) denotes its length. We say that a word \(v\) \text{occurs} in a word \(u\) at \(i\) when \(u\lb i,i+|v|\rb=v\). If \(u\) and \(w\) are words, then by \(uw\) we mean the concatenation of \(u\) and \(w\). By \(w^n\), we mean
	\[w^n = \overbrace{ww\cdots w}^{n\text{ times}}.\]

Let us restrict our attention to the alphabet \(\{0,1\}\), and to \(\C F\), the set of all finite words beginning and ending with 0. For \(u,w\in\C F\), we say that \(u\) \emph{builds} the word \(w\) (which we shall also write \(u\bld w\)) if 
	\[w=u1^{a_1}u1^{a_2}\cdots1^{a_r}u.\]

We may extend this relation to infinite (respectively, bi-infinite) words in the following way: we say that \(w\) builds \(W\in\{0,1\}^\B N\) if there are integers \(\{a_n\}_{n\in\B N}\) (respectively, \(n\in\B Z\)) such that
	\[W=w1^{a_0}w1^{a_1}\cdots\quad (\text{respectively,}\ W=\cdots 1^{a_{-1}}w1^{a_0}w\cdots).\]
As in the finite case, if the \(a_n\) are all equal, we write that \(w\) simply builds \(W\). Other notions, such as occurrence, extend to (respectively,~bi-) infinite words in the same way.

\subsection{Rank-One Transformations}

Rank-one transformations are an important class of measure-preserving transformations. They are ergodic, invertible, generic in the group of invertible measure-preserving transformations (for the infinite measure-preserving case see \cite{BSSSW15}), and an important source of examples and counterexamples in ergodic theory. We shall define them by the method of \emph{cutting and stacking}, although this definition is equivalent to a variety of other definitions; see \cite{Fe97} for finite measure and \cite{BSSSW15} for infinite measure.

The construction is inductive and proceeds by defining a sequence of columns. A {\it column} $C=\{I_0,\ldots, I_{h-1}\}$ consists of a finite sequence of disjoint intervals of the same length,  called {\it levels}, that we order from 0 to $h-1$, where $h$ is an integer called the {\it height} of the column. A column $C$ defines a partial  transformation by sending level $I_j $ to level $I_{j+1}, j\in\{0,\ldots,h-2\}$, by the unique translation between the two intervals, so we can write $I_{j+1}=T(I_j)$, and call $I_0$ the {\it base} of the column. We now define the  rank-one transformation corresponding to the sequence $(r_n)$ of numbers of {\it cuts}, $r_n\in\{2,3,\ldots\}$, and the sequence $(s_n)$ of $r_n$-tuples of numbers of {\it spacers}, $s_n(i)\in\{0,1,2,\ldots\}$ for $0 \leq i \leq r_n-1$. The base step is a column $C_0=\{B_0\}$ consisting of a single interval $B_0$ (which in the case of infinite measure may always be assumed  to be the unit interval $[0,1)$). Given column  \(C_n=\{B_n, T(B_n),\ldots, T^{h_n-1}(B_n)\}\), to obtain $C_{n+1}$ cut each level of $C_n$ into $r_n$ subintervals of equal length, ordered from left to right, place $s_n(i)$ new subintervals (called {\it spacers}) above the $i$th topmost subinterval of $C_n$, and stack right above left to form column $C_{n+1}$ of height 
	\begin{equation}
	\label{E:height}
		h_{n+1}= r_n h_n+\sum_{i=0}^{r_n-1}s_n(i).
	\end{equation}


To be more precise with  the inductive step, cut $B_n$ into $r_n$ equal-length subintervals denoted $B_{n,i}$ for 
$i=0,\ldots r_n-1$. For each $i=0,\ldots, r_n-1$ choose new intervals $S_{n,i,k}$
  (where $k$ ranges from 0 to $s_n(i)-1$) 
of the same length as $B_{n,i}$, and place them above $T^{h_n-1}(B_{n,i})$ to
form the new $i$th subcolumn
\[C_{n,i}=\{B_{n,i}, T(B_{n,i}),\ldots,T^{h_n-1}(B_{n,i}),S_{n,i,0},S_{n,i,1},\ldots,S_{n,i,s_n(i)-1}\}.\]
Extend $T$ so that is sends $T^{h_n-1}(B_{n,i})$ to $S_{n,i,0}$, each spacer to the one above it, and the top spacer, namely  $S_{n,i,s_n(i)-1}$, to the bottom of the next subcolumn, namely
$B_{n,i+1}$ if $i < r_n-1$ and it remains undefined when $i=r_n-1$ (the top spacer of the last subcolumn). (Note that when $i=r_n-1$ the spacer 
$S_{n,i,s_n(i)-1}$ becomes the top level of column $C_{n+1}$ and the transformation  will be defined on part of this at the next stage of the construction.)  This is the process of stacking the right over the left subcolumn and forms column $C_{n+1}$ with base $B_{n+1}=B_{n,0}$ 
and height $h_{n+1}$ as in Equation \eqref{E:height}.  We  note that every level in $C_n$ is a union of levels in $C_{n+1}$.
We let $X$ be the union of all the levels of all the columns. We note that $T$ is 
defined on all levels of $C_n$ except the top and $C_{n+1}$ extends the definition 
to part of the top level of $C_n$. As the length of the levels goes to 0, in the limit this defines 
a  transformation on $X$. It is measure-preserving as levels are sent to levels of the same length and it is invertible a.e. 
(by convention intervals are left-closed, right open, and if we delete the positive orbit of 0 the transformation will be invertible everywhere). 
It is   ergodic, as the levels can be made arbitrarily full of any measurable sets, see, e.g. \cite{si08}.

%
%
%
%
%

\subsection{Rank-One Words}

To the cutting and stacking construction of a rank-one system we  associate so-called \textit{rank-one words} in the following way. Let \(T\) be constructed using cutting parameter $(r_n)$ and stacking parameter $(s_n)$, where we assume that for all $n\geq 0$, $s_n(r_n-1)=0$, i.e., no spacers are placed on the final subcolumn at any stage of the construction; as we argue below it is not hard to see that this is not a restriction.  Then, we inductively define
	\begin{align*}
	w_0=0 \text{ and }
	w_{n+1}=w_n1^{s_n(1)}\cdots1^{s_n(r_n-2)}w_n.
	\end{align*}
It is immediately obvious that \(w_n\bld w_m\) for \(n\leq m\). We call \(w_n\) the \emph{\(n\)-th rank-one word associated to \(T\)}. There is a unique infinite word \(W\) such that \(w_n\bld W\) for all \(n\). The infinite word defined this way is said to be the \textit{infinite rank-one word} associated to \(T\). It is clear that one can read the cutting and spacer parameters from the rank-one words. We also remark that the same transformation may have different presentations in terms of its cutting and spacer parameters.


Now we show that the fact that we require that no spacers be placed on the final subcolumn at any stage of the construction does not restrict the definition of rank-one transformations; we can delay the addition of the spacers on the final subcolumn to subsequent steps in the construction. That is, we can set
    \begin{itemize}
    \item \(s^\prime_0(i) =s_0(i)\) for \(i=0,\ldots,r_{0}-2\), and \(s'_0(r_0-1)=0\),
    \item for \(n>0\), \(s'_n(i) =\sum_{k=0}^{n-1} s_k(r_k-1)+s_n(i)\) for \(i=0,\ldots,r_n-2\), and  \(s^\prime_n(r_n-1)=0\).
    \end{itemize}
The rank-one transformation defined by cutting and spacer parameters $(r_n)$ and $(s'_n)$ will be isomorphic to the transformation defined by $(r_n)$ and $(s_n)$. 

As an illustration, the construction of the infinite Chac\'on transformation is usually in terms of the parameters \(r_n=3\), , \(s_n(0)=0\), \(s_n(1)=1\), \(s_n(2)=3 h_n+1\), for all $n\geq 0$ \cite{AdFrSi97}. In the standard construction, to obtain $C_1$ after $C_0$ we subdivide the interval in $C_0$ 
intro three subintervals, put one spacer above the middle subinterval and four spacers above the last subinterval. In the modified construction, subdivide the subinterval in $C_0$ 
into three subintervals, put a single spacer in the middle subinterval and no spacer above
the last subinterval; this gives a column $C_1^\prime $ of height 4. Next subdivide 
each level of $C_1^\prime $ into three subintervals and now put four spacers on top of 
the first subinterval, four plus one on top of the second subinterval and none on the last subinterval. (Note that the top level of $C_1^\prime $ is the same as the last subinterval of $C_0^\prime$.) This process defines
the following rank-one words:
	\begin{align*}
		&w_0	&&=0\\
		&w_1	=w_0 1^{s'_0(0)} w_0 1^{s'_0(1)} w_0	&&=0010\\
		&w_2	=w_1 1^{s'_1(0)} w_1 1^{s'_1(1)} w_1	&&=001011110010111110010
	\end{align*}
Note that in this case, \(|w_n|=h_n\), where \(h_n\) indicates the height parameter of the modified construction, as we have not added any spacers to the final subcolumn.

\subsection{\(T\)--\(P\) Names}

Each point \(x\in X\) also has an associated infinite \(T\)--\(P\) name for any finite partition \(P\) of \(X\). For our purposes, the desired partition of \(X\) is \(P=\{B_0,X\setminus B_0\}\).

\begin{defn}
The \(T\)--\(P\) {\it name}  of \(x\) is the bi-infinite sequence of 0s and 1s denoted by $\Theta(x)$ and defined for $i \in \mathbb{Z}$ by
	\[\Theta(x)(i) = 
	\begin{cases}
	  0   &\text{if } T^i(x)\in B_0,\\
	     1 & \text{if }T^i(x)\notin B_0. 
	     \end{cases}\]
     \end{defn}

We observe that if $T$ is any rank-one transformation and $P = \{B_0, X \setminus B_0\}$, then any rank-one word $w_n$ builds $\Theta(x)$ for a.e.~$x \in X$. Moreover there is no first or last occurrence of $w_n$ in $\Theta(x)$ for a.e.~\(x\in X\). We desire that the \(T\)--\(P\) name so-generated be unique for a.e.~\(x\in X\), i.e., that \(\Theta\) be injective. We prove this for the following subclass of rank-one transformations.

\begin{defn}
\label{defParBou}
A rank-one transformation \(T\) is said to be \emph{partially bounded} if it admits cutting and spacer parameters $(r_n)$ and \((s_n)\) with no spacers added to the final subcolumn at any stage, and satisfying the following property: There are integers \(\mathfrak R > 0\) and \(\mathfrak S > 0\) such that for all $n\geq N$, for some integer $N$, 
	\begin{enumerate}
		\item  \(r_n<  \mathfrak R\);
		\item \(|s_n(i)-s_n(j)|<\mathfrak S\),  for \(0\leq i,j<r_n-1\); 
		\item  \(s_n(i)\geq|w_n|\), for  \(0\leq i<r_n-1\).
	\end{enumerate}
\end{defn}

Note that by condition (3) all partially bounded transformations act on infinite measure spaces. We may and do assume without loss of generality that \(N=0\).

\begin{defn}
\label{defExp} It follows from the definition of \(T\)--\(P\)  names that if $T^i(x) \in B_n$, then the  \(T\)--\(P\)  name of $x$ has an occurrence of $w_n$ at $i$. We say that such an occurrence of \(w_n\) is \emph{expected}; otherwise, if \(T^i(x)\notin B_n\), it is called \emph{unexpected}.
\end{defn}

\begin{lem} \label{lemParBouUni}
Let $T$ be a partially bounded rank-one transformation, and consider the partition $P = \{B_0, X \setminus B_0\}$. Let $(w_n)$ denote the sequence of rank-one words corresponding to the partially bounded parameters. Then:
\begin{enumerate}
\item The map $\Theta$ that sends a point to its $T-P$ name is injective.
\item All occurrences of $w_n$ in $\Theta(x)$ are expected.
\end{enumerate}
\end{lem}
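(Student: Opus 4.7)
The plan is to establish part~(2) first and then derive part~(1) from it. The key combinatorial ingredient, which is what condition~(3) of Definition~\ref{defParBou} buys us, is the following \emph{separation property}: any two consecutive expected occurrences of $w_n$ in $\Theta(x)$ are separated by a block of consecutive $1$s of length at least $|w_n|$.

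I would justify the separation property by following the column dynamics directly. Between two consecutive visits of the orbit of $x$ to $B_n$, the orbit first traverses $C_n$ (contributing the $|w_n|$ symbols of the expected occurrence of $w_n$) and then follows an uninterrupted run of spacer steps before returning to $B_n$. Because no spacers are added to the final subcolumn at any stage, this run consists of a single spacer block $1^{s_m(j)}$ for some $m \geq n$ and some $j \leq r_m - 2$, namely the smallest level at which the two visits fall into different $w_m$-subblocks of a common $w_{m+1}$. Condition~(3) then gives $s_m(j) \geq |w_m| \geq |w_n|$.

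Part~(2) reduces to a short case analysis using the fact (immediate by induction on $n$) that $w_n$ begins and ends with $0$. Suppose for contradiction that $w_n$ has an unexpected occurrence at some position $i$, and let $i_k < i < i_{k+1}$ be the consecutive expected positions sandwiching it. By the separation property, positions $\lb i_k + |w_n|, i_{k+1}\rb$ are all $1$s and $i_{k+1} - i_k \geq 2|w_n|$. If $i \geq i_k + |w_n|$, then $\Theta(x)(i) = 1$ contradicts $w_n(0) = 0$. Otherwise $i \leq i_k + |w_n| - 1$; since $w_n$ ends in $0$, the occurrence's final position $i + |w_n| - 1$ cannot land in the $1$-block, so $i + |w_n| - 1 \geq i_{k+1}$, which combined with $i \leq i_k + |w_n| - 1$ gives $i_{k+1} \leq i_k + 2|w_n| - 2$, contradicting the separation bound.

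For part~(1), assume $\Theta(x) = \Theta(y)$. Part~(2) implies that for each $n$, the sets $\{i : T^i(x) \in B_n\}$ and $\{i : T^i(y) \in B_n\}$ both coincide with the set of occurrences of $w_n$ in the shared $T$--$P$ name, and are therefore equal. Since $X = \bigcup_n C_n$, for $n$ sufficiently large we may choose $j_n$ with $T^{-j_n}(x) \in B_n$; by the equality just noted, $T^{-j_n}(y) \in B_n$ as well, so $x$ and $y$ both lie in the $j_n$-th level of $C_n$. As $|B_n| = \prod_{k=0}^{n-1} r_k^{-1} \to 0$, the levels of $C_n$ shrink to zero length, forcing $x = y$. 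The main obstacle is the careful justification of the separation property — tracing the column structure to confirm that the spacer interval between consecutive returns to $B_n$ really is a single $s_m(j)$-block; once that is in hand, both the case analysis for~(2) and the injectivity argument for~(1) are routine.
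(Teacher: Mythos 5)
Your argument is correct, but it is organized differently from the paper's. The paper proves part (1) directly and geometrically: it locates a column $C_m$ in which the two points occupy different levels and uses condition (3) of Definition~\ref{defParBou} to exhibit a single time at which one orbit has already entered the long spacer block above $C_m$ while the other has not, so that the two $T$--$P$ names disagree at that coordinate; part (2) is then asserted to be immediate from condition (3) with no further detail. You invert this: you prove (2) first, via the separation property that consecutive expected occurrences of $w_n$ are separated by a single block $1^{s_m(j)}$ with $m \ge n$, hence by at least $|w_n|$ ones, combined with the fact that $w_n$ begins and ends in $0$; you then deduce (1) because, once every occurrence is expected, the occurrence set of $w_n$ in the shared name pins down the level of each point in $C_n$, and the levels shrink to zero length. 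Both routes rest on the same consequence of condition (3) (spacer runs of length at least $|w_n|$), but your derivation of (1) from (2) is arguably cleaner: the paper's orbit-timing argument implicitly requires tracking which levels of $C_m$ meet $B_0$, a point it passes over quickly, whereas your version needs only the symbolic statement (2) together with the shrinking of levels. Your write-up also supplies the case analysis for (2) that the paper omits, and it is sound; the one step that genuinely needs care --- that the gap between consecutive expected copies of $w_n$ is a single block $1^{s_m(j)}$ with $j \le r_m-2$ --- is exactly the point you flag, and it follows from the convention $s_m(r_m-1)=0$ as you say.
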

\begin{proof}
Statement (1) says exactly that the collection $\{T^n(B_0), T^n(X \setminus B_0)\}_{n\in\B Z}$ separates points. If $x$ and $y$ are distinct points, then there exists some column $C_m$ where $x$ and $y$ belong to different levels. Suppose without loss of generality that $x$ belongs to a lower level than $y$. Choose the smallest $n \geq 1$ such that $T^n(x)$ belongs to the top level of $C_m$. Then by condition (3) of partial boundedness, $T^n(x)$ will be in the spacers so that $T^n(x) \in [0,1)$ and $T^n(y) \not \in [0,1)$.

Statement (2) follows immediately from condition (3) in the definition of partial boundedness.
\end{proof}



We observe  that, given an infinite rank-one word \(W\) coming from the cutting and spacer parameters of the rank-one transformation \(T\), it is possible to generate a symbolic system \((S(W),\Sc M,\mu,\sigma)\) in the following way: let
	\[S(W) :=\left\{x\in\{0,1\}^\B Z:(\forall\ n,m)(\exists\ n',m')\big(x\lb n,m\rb=W\lb n',m'\rb\big)\right\},\]
  in other  words, every finite subsequence of \(x\) is a subsequence of \(W\). We set \(\sigma(x)(i)=x(i+1)\), i.e., \(\sigma\) is the shift operator. We define  the cylinder sets
	\[E_{w_n,i} :=\{x\in S(W):\text{ \(x\) has an expected occurrence of \(w_n\) at \(i\)}\}.\]
Then $E_{w_n,0}$ corresponds to the base level $B_n$ of the $n$-th column, and $E_{w_n,i}$ corresponds to $T^{-i}(B_n)$ (recall that every occurrence of $w_n$ is expected for partially bounded transformations).
To obtain a measure,  define $\mu(E_{w_0,0}) = 1$ and extend $\mu$ uniquely to a Borel $\sigma$-finite atomless shift-invariant measure on $X$; this is the same as the push-forward measure from the geometric construction (as we choose to always start with the unit interval).
%
%
Then \(\Theta\) will be an isomorphism between \(X\) and \(S(W)\) for all partially bounded transformations; we use this isomorphism to identify $X$ with the symbolic space and $T$ with the shift.

\section{Examples}\label{S:ex}

A wide collection of infinite rank-one transformations fall into the class of partially bounded transformations, including the infinite Chac\'on transformation \cite{AdFrSi97} and the infinite Hajian-Kakutani \cite{HaKa70} transformation (defined by \(r_n=2\), \(s_n(1)=0\), \(s_n(2)=2h_n+1\)). As outlined in Section 2.3, these transformations have an alternative presentation with no spacers on the final subcolumn. In fact, when using the cutting and stacking construction notation, all rank-one transformations that have bounded cuts \((r_n)\), and uniformly bounded spacers on the first \(r_n -1\) columns but having a very large number of spacers added on the last column can be shown to have a presentation as a partially bounded transformation. The formal statement goes as follows:

\begin{lem}
\label{lemExpGro}
A rank-one transformation with cutting parameters \((r_n)\) and spacer parameters \((s_n)\) satisfying for sufficiently large $n$,
	\begin{enumerate}
		\item \(r_n \leq \mathfrak R\),
		\item \(s_n(i) < \mathfrak S\) for \(0 \leq i \leq r_n-2\),
		\item\(s_n(r_n-1)\geq h_{n+1}/2\),
	\end{enumerate}
has an isomorphic presentation in terms of the parameters \(r_n\) and 
\[s'_n(i)=s_n(i)+\sum_{k=0}^{n-1}s_k(r_k-1)\text{ for }0\leq i\leq r_n-2\text{ and }s'_n(r_n-1)=0\] which is partially bounded.
\end{lem}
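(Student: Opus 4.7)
My plan is to take the isomorphism claim as essentially given by the delayed-spacers construction already described in Section~\ref{S:rank1}: the same physical spacers are being added to the column, but their insertion is shifted from the top of stage $k$ to the interiors of subcolumns at later stages. The real work is to verify that the rewritten parameters $(r_n, s'_n)$ satisfy the three conditions of Definition~\ref{defParBou}.

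Conditions (1) and (2) will be almost immediate. Condition (1) is just hypothesis (1) of the lemma. For condition (2), writing $T_n := \sum_{k=0}^{n-1} s_k(r_k-1)$, the delay term $T_n$ is the same for all $i < r_n - 1$, so $|s'_n(i) - s'_n(j)| = |s_n(i) - s_n(j)| < \mathfrak{S}$ by hypothesis (2).

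The substance lies in condition (3): showing $s'_n(i) \geq |w'_n|$ for $0 \leq i < r_n - 1$. Let $h'_n = |w'_n|$ denote the height of the $n$-th column in the modified construction. Since $s'_n(i) = s_n(i) + T_n \geq T_n$, it suffices to show $T_n \geq h'_n$. To get there I will first establish, by induction on $n$ using the height recursions for the two constructions, the identity
\[h'_n = h_n - T_n;\]
the inductive step reduces to checking that $h_{n+1} - h'_{n+1} = T_n + s_n(r_n-1) = T_{n+1}$, a short direct calculation from the definitions of $s'_n$ and of $h_{n+1}$.

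With the identity in hand, the target $T_n \geq h'_n$ becomes $T_n \geq h_n/2$, and this follows at once from hypothesis (3) of the lemma, since $T_n \geq s_{n-1}(r_{n-1}-1) \geq h_n/2$. Combining everything, $s'_n(i) \geq T_n \geq h_n/2 \geq h_n - T_n = h'_n = |w'_n|$, which is condition (3). The only real obstacle is tracking the height identity $h'_n = h_n - T_n$ carefully through the induction; once that is in place, all three partial boundedness conditions drop out as bookkeeping. Since both the hypotheses of the lemma and the conditions in Definition~\ref{defParBou} are required only for sufficiently large $n$, the small-$n$ edge cases (where $T_n$ may be too small) present no difficulty.
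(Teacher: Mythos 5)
Your proposal is correct and follows essentially the same route as the paper: conditions (1) and (2) are immediate since the common delay term cancels, and condition (3) comes from the chain $|w'_n| \leq h_n - s_{n-1}(r_{n-1}-1) \leq s_{n-1}(r_{n-1}-1) \leq s'_n(i)$, which is exactly your $h'_n = h_n - T_n \leq h_n/2 \leq T_n \leq s'_n(i)$ with the height identity made explicit. The paper leaves the bound $|w'_n| \leq h_n - s_{n-1}(r_{n-1}-1)$ unjustified, so your inductive verification of $h'_n = h_n - T_n$ is a welcome, if minor, addition.
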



\begin{proof}
By assumption, for all sufficiently large $n$, we have $r_n \leq \mathfrak R$ and $|s'_n(i)-s'_n(j)|=|s_n(i)-s_n(j)|<\mathfrak S$. Let $(w_n)$ denote the sequence of rank-one words associated to the parameters $(r_n)$ and $(s'_n)$. Then for all $0 \leq i \leq r_n-2$,
	\[|w_{n+1}|\leq h_{n+1} - s_n(r_n-1) \leq s_n(r_n-1) \leq s'_{n+1}(i)\]
\end{proof}

We now study a second class of transformations.

\begin{defn}
\label{defCen}
For an invertible measure-preserving transformation \(T\) of \(X\), by \(C(T)\) we denote the \emph{centralizer} of \(T\), i.e. the set of all invertible measure-preserving transformations of \(X\) commuting with \(T\). If \(C(T)=\{T^n\}_{n\in\B Z}\), then \(T\) is said to have \emph{trivial centralizer}.
\end{defn}

\begin{defn}
\label{defRig}
Given \((X,\Sc B, \mu, T)\), a \(\sigma\)-finite Lebesgue measure space with  a measure-preserving transformation \(T\), if there is an increasing  sequence of integers \(n_k\) such that for all sets \(A\) of finite measure,
	\[\lim_{k\to\infty}\mu[T^{n_k}(A)\triangle A]=0\]
we say that \(T\) is {\it rigid}.
\end{defn}

The following proposition is well known in the finite measure-preserving case, and essentially  the same proof in King \cite{Ki86}  holds for infinite measure-preserving transformations.  

\begin{prop}
\label{propRigUncCen}
All rigid transformations have uncountable centralizer.
\end{prop}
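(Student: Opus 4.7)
The plan is to show that the weak closure $H := \overline{\{T^n : n \in \B Z\}}$, taken inside the Polish group $G$ of invertible $\mu$-preserving transformations of $(X,\Sc B,\mu)$, is already uncountable; since $H \subseteq C(T)$, the proposition will follow.

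First, I would equip $G$ with the weak topology, in which $S_j \to S$ iff $\mu(S_j A \triangle SA) \to 0$ for every set $A$ of finite measure. It is standard in both the finite- and infinite-measure settings that $G$ is then a Polish group and that the centralizer $C(T)$ is weakly closed. Hence $H$, as a closed subgroup of $G$, is itself a Polish topological group (and abelian, containing all powers of $T$).

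Second, I would use rigidity to show that $\Id$ is a non-isolated point of $H$. Being strictly increasing, the rigidity sequence $(n_k)$ gives $T^{n_k} \to \Id$ weakly. In the applications of interest $T$ is ergodic on a nonatomic infinite $\sigma$-finite measure space and so has infinite order: if $T^m = \Id$ for some $m \geq 1$, the $T$-orbit of any set of positive finite measure would itself have finite measure, contradicting ergodicity together with $\mu(X) = \infty$. Consequently $T^{n_k} \neq \Id$ for all $k$, and $\Id$ is genuinely a non-isolated point of $H$.

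Third, I would invoke the standard Baire-category dichotomy: by translation invariance, a topological group is either discrete (every point isolated) or has no isolated points, and since $\Id$ is a limit of distinct elements of $H$, the latter alternative holds. A nonempty Polish space with no isolated points is uncountable by Baire's theorem, so $H$, and a fortiori $C(T)$, is uncountable. The main obstacle is purely bookkeeping: King's original proof was written in the probability-preserving setting, so one has to verify that the weak topology on $G$ in the infinite-measure context is still Polish and that centralizers are still weakly closed. Both facts are well established in the infinite measure-preserving literature, and once they are in hand King's argument transfers essentially verbatim.
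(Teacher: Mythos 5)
Your proposal is correct and is essentially the argument the paper has in mind: the paper gives no written proof, deferring to King's finite-measure argument, and your write-up is exactly that argument transferred to the infinite-measure Polish group of measure-preserving transformations (weak closure of $\{T^n\}$ is a closed abelian subgroup of $C(T)$ that is perfect by rigidity and aperiodicity, hence uncountable). The only point worth flagging is that your aperiodicity step quietly uses ergodicity, which Definition~\ref{defRig} does not assume, but this is harmless in the paper's context and the finite-order case has uncountable centralizer for trivial reasons anyway.
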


These results suffice to establish the non-triviality of Theorem~\ref{thmMain}. Many common examples of infinite rank-one transformations are partially bounded, and hence not rigid. For example, the Hajian-Kakutani transformation defined in \cite{AdFrSi97}, is not rigid, giving another proof of Proposition 2.3 in  \cite{BaYa15}.

\begin{rmk}
Conditions (1) and (3) in the definition of partially bounded transformations, or close equivalents, are necessary in the proof of Theorem 5.1; without them, it is not difficult to construct examples which are rigid, and hence have uncountable centralizer. The authors are unaware of any transformation with non-trivial centralizer which satisfies Conditions (1) and (3) but not Condition (2).
\end{rmk}


\section{Preliminaries}\label{S:prelim}

In this Section and Section~\ref{S:mainthm} we shall work in the following setup: we have a partially bounded rank-one transformation \(T\) acting on an infinite, \(\sigma\)-finite Lebesgue space \(X\), and a measure-preserving transformation \(S\) such that \(S\circ T=T\circ S\) a.e. The transformation \(T\) comes equipped with parameters \((r_n)\), \((s_n)\)---where  we assume $s_n(r_n-1)=0$---and rank-one words \(w_n\). Moreover, by Lemma \ref{lemParBouUni}, we need not distinguish between a point \(x\in X\), and the \(T\)--\(P\)   name of \(x\), \(\Theta(x)\). We fix $\kappa \in \N$ so that \[|w_\kappa|>\mathfrak S.\] 
We shall take $n > \kappa$ to be fixed (what $n$ is will be determined later). We extend arguments of del Junco \cite{dJ79} and 
Gao and Hill \cite{GaHi16} to partially bounded transformations  to show that $S$ must be a power of $T$.

The following notions will be the key to translating information about when \(S(x)\) looks like a translate of \(x\) locally to information about when the same is true globally.



\begin{defn} \label{good}
An occurrence of $w_n$ at $i$ in $x$ is called \emph{good} if there exists an occurrence of $w_\kappa$ at $i$ in $S(x)$; otherwise the occurrence is called \emph{bad}.
\end{defn}

If a copy of $w_n$ at $i$ in $x$ is good, then there is a unique copy of $w_n$ in $S(x)$ that contains the $w_\kappa$ at position $i$. We shall write the beginning position of this $w_n$ as $i-\rho_{x,i}$, where $0 \leq \rho_{x,i} \leq |w_n| - |w_\kappa|$ (see Figure \ref{goodimage}). The following proposition is the crucial element in many of the arguments to come (see Figure \ref{a=bimage}).

	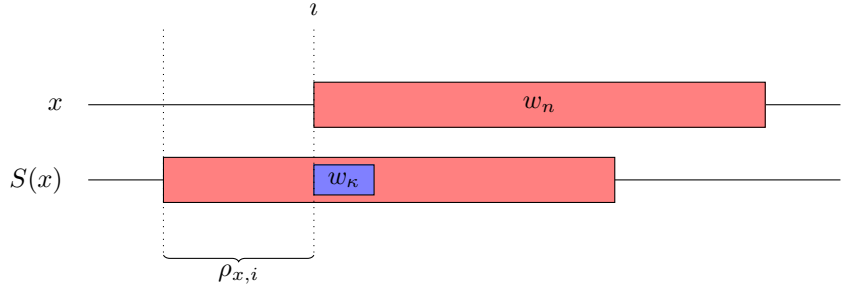
\begin{figure}
		\begin{tikzpicture}
			\node[left=6pt] at (0,1) {\(x\)};
			\node[left=6pt] at (0,0) {\(S(x)\)};
			\draw (0,1) -- (10,1);
			\draw (0,0) -- (10,0);
			\draw[fill=white!50!red] (3,0.7) rectangle (9,1.3) node[pos=.5] {\(w_n\)};
			\draw[fill=white!50!red] (1,-0.3) rectangle (7, 0.3);
			\draw[fill=blue!50!white] (3,-0.2) rectangle (3.8,0.2) node[pos=.5] {\(w_\kappa\)};
			\draw[dotted] (3,2) node[above=2pt] {\(i\)} -- (3,-1);
			\draw[dotted] (1,2) -- (1,-1);
			\draw[decorate,decoration={brace,mirror},below=3pt] (1,-1) -- (3,-1) node[pos=.5] {\(\rho_{x,i}\)};
			
		\end{tikzpicture}
		\caption{An illustration of a good occurrence of $w_n$ in $x$.}
		\label{goodimage}
	\end{figure}





\begin{prop} \label{a=b}
Let $x \in X$. Suppose that there is a good occurrence of $w_n$ at $i$ in $x$. Consider the string $w_n 1^a w_n$ that occurs at $i$ in $x$ and the string $w_n 1^b w_n$ that occurs at $i-\rho_{x,i}$ in $S(x)$. Then the next copy of $w_n$ in $x$ occurring at $i + |w_n| + a$ is good if and only if $a=b$.
\end{prop}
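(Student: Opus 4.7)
My plan is to verify the two implications separately by analyzing which $w_n$-block of $S(x)$ contains the relevant $w_\kappa$. For the forward direction, I would suppose $a = b$, so the strings $w_n 1^a w_n$ at position $i$ in $x$ and $w_n 1^b w_n$ at position $i - \rho_{x,i}$ in $S(x)$ are identical. The $w_\kappa$ at absolute position $i$ in $S(x)$ sits at relative offset $\rho_{x,i}$ within the first of the two $w_n$-blocks of this pattern; by identical copying, the second $w_n$-block contains a $w_\kappa$ at the same relative offset, whose absolute position is $(i - \rho_{x,i}) + |w_n| + b + \rho_{x,i} = i + |w_n| + a$. This produces the required $w_\kappa$ at $i + |w_n| + a$ in $S(x)$, so the next copy of $w_n$ in $x$ is good.

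For the backward direction, I would suppose $S(x)$ has a $w_\kappa$ at $j := i + |w_n| + a$ and aim to conclude $a = b$. By Lemma~\ref{lemParBouUni}(2) this occurrence is expected. Because condition~(3) forces every gap between consecutive $w_n$-blocks of $S(x)$ to consist of at least $|w_n| \geq |w_\kappa|$ consecutive $1$'s while $w_\kappa$ starts and ends with $0$, this $w_\kappa$ must lie entirely inside some $w_n$-block of $S(x)$, starting at a position $p^*$ with $j - p^*$ an expected position of $w_\kappa$ in $w_n$. The central step is to identify $p^*$ with $p_1 := i - \rho_{x,i} + |w_n| + b$, the $w_n$-block of $S(x)$ directly following the one containing the original $w_\kappa$. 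Once this is done, $j - p_1 = a - b + \rho_{x,i}$ is also an expected position, so $a - b$ equals a difference of two expected positions of $w_\kappa$ in $w_n$. Since distinct expected positions are separated by at least $|w_\kappa| + \min_k s_\kappa(k) \geq 2|w_\kappa|$ by condition~(3) applied at level $\kappa$, either $a = b$ or $|a - b| \geq 2|w_\kappa|$. The latter case is ruled out by partial boundedness: same-level spacer values satisfy $|a - b| < \mathfrak{S} < |w_\kappa|$ by condition~(2) and our choice of $\kappa$, while distinct-level spacer values $a = s_{m_a}(k_a)$ and $b = s_{m_b}(k_b)$ with, say, $m_a < m_b$ satisfy $b \geq |w_{m_b}| \geq |w_{m_a+1}| \geq r_{m_a}|w_{m_a}| + a \geq 2|w_n| + a$, giving $|a - b| \geq 2|w_n|$, which exceeds the bound $|w_n| - |w_\kappa|$ imposed by $j - p_1 \in [0, |w_n| - |w_\kappa|]$.

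The main obstacle is the identification $p^* = p_1$: each block $p_k$ of $S(x)$ with $k \geq 2$ is at least $2(k-1)|w_n|$ farther from $p_0$ than $p_1$, so $p^* = p_k$ would force $a \geq (2k - 2)|w_n| + |w_\kappa|$. Ruling out this large-$a$ scenario requires a careful structural comparison of the gap values between consecutive $w_n$-blocks of $x$ and $S(x)$, using conditions~(1) through~(3) together with the good $w_n$ at $i$ already in hand, and is the most technical part of the argument.
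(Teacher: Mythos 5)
Your forward direction is correct and matches the paper's (the paper dismisses it with ``clearly''), and your closing offset arithmetic --- two expected positions of $w_\kappa$ inside a single $w_n$ differ by $0$ or by at least $2|w_\kappa|$, which combined with $|a-b|<\mathfrak S<|w_\kappa|$ (same level) or $|a-b|\geq 2|w_n|$ (different levels) forces $a=b$ --- is sound \emph{conditional on} the identification $p^*=p_1$. But that identification is not a technical loose end to be tidied up later; it is the entire content of the proposition, and you have not supplied an argument for it. The scenario you defer --- $a$ much larger than $b$, so that $i+|w_n|+a$ lands in or near a distant $w_n$-block $p_k$, $k\geq 2$, of $S(x)$ --- is precisely the case that can actually occur when the two gaps come from spacers of different levels, and ruling out an expected $w_\kappa$ there genuinely requires the quantitative hypotheses. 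As written, the proof establishes only ``if the $w_\kappa$ at $i+|w_n|+a$ sits in block $p_1$, then $a=b$,'' which is not the statement.

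For comparison, the paper closes this gap by writing $a=s_{m_1}(\ell_1)$ and $b=s_{m_2}(\ell_2)$ with $m_1,m_2\geq n$ and splitting on the comparison of the levels. If $m_1=m_2$, condition (2) gives $0<|a-b|<\mathfrak S<|w_\kappa|$, and the expected $w_\kappa$ at $i+|w_n|+b$ in $S(x)$ precludes another occurrence at $i+|w_n|+a$. If $m_1<m_2$, condition (3) forces $b\geq |w_{m_1+1}|> a+\rho_{x,i}$, so the position $i+|w_n|+a$ falls inside the run $1^{b}$ of $S(x)$ and carries a $1$. If $m_1>m_2$, one looks at the $w_{m_1}$-block of $S(x)$ containing position $i-\rho_{x,i}$: by condition (3) it ends before $i+|w_n|+a$, and by condition (2) the run of $1$'s following it has length within $\mathfrak S$ of $a$ and covers the position in question. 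This level-by-level case analysis is exactly the ``careful structural comparison'' you name but do not carry out; without it (or an equivalent argument), the proof is incomplete.
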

\begin{proof}
If $a=b$, then clearly the next copy of $w_n$ in $x$ will be good. Suppose that $a \ne b$. Write $a = s_{m_1}(\ell_1)$ and $b = s_{m_2}(\ell_2)$ for some $m_1, m_2 \geq n$ and \(\ell_1\in\lb0,r_{m_1}-1\rb\), $\ell_2\in\lb0,r_{m_2}-1\rb\). We consider the various cases:
\begin{case}
	Suppose $m_1 = m_2$. Then by Condition (2) in Definition \ref{defParBou}, $0 < |a - b| < \mathfrak{S}$. Since there is an occurrence of $w_\kappa$ at $i+|w_n|+b$ in $S(x)$ and $|w_\kappa| > \mathfrak S$, the copy of $w_n$ at $i + |w_n| + a$ in $x$ must be bad.
\end{case}
\begin{case}
	Suppose $m_1 < m_2$. Then by Condition (3), there must be a 1 at $i+|w_n|+a$ in $S(x)$ since $s_{m_2}(\ell_2) > s_{m_1}(\ell_1) + \rho_{x,i}$. Hence the copy of $w_n$ at $i+|w_n|+a$ in $x$ is bad.
\end{case}
\begin{case}
	Suppose $m_1 > m_2$. Consider the copy of $w_{m_1}$ that contains the copy of $w_n$ at $i-\rho_{x,i}$ in $S(x)$. By Condition (3), this copy must end at some $j < i + |w_n| + a$. Then there is a long stretch of 1's of length $s_{m_1}(\ell_3)$ that begins at $j+1$ in $S(x)$ and which, by Condition (2), will end after position $i+|w_n|+a$. It follows that the occurrence of $w_n$ at $i+|w_n|+a$ in $x$ is bad.
\end{case}
\end{proof}


\begin{figure}
\begin{tikzpicture}

\node[left=6pt] at (0,1) {$x$};
\node[left=6pt] at (0,0) {$S(x)$};
\draw (0,1) -- (12,1);
\draw (0,0) -- (12,0);
\draw[fill=white!50!red] (2,0.7) rectangle (4,1.3) node[pos=.5] {$w_n$};
\draw[fill=white!50!red] (9,0.7) rectangle (11,1.3) node[pos=.5] {$w_n$};
\draw[fill=white!50!red] (1.5,-0.3) rectangle (3.5,0.3);
\draw[fill=white!50!red] (7.5,-0.3) rectangle (9.5,0.3) node[pos=.5] {$w_n$};
\draw[fill=black!10!white] (4.1,0.75) rectangle (8.9,1.2) node[pos=.5] {$1^a$};
\draw[fill=black!10!white] (3.6,-0.25) rectangle (7.4,0.2) node[pos=.5] {$1^b$};

\draw[fill=blue!50!white] (2,-0.2) rectangle (2.5,0.2) node[pos=.5] {$w_\kappa$};
\draw[dotted] (2,2) node[above=2pt] {\(i\)} -- (2,-1);
\draw[dotted] (1.5,2) -- (1.5,-1);
\draw[decorate,decoration={brace,mirror},below=3pt] (1.5,-1) -- (2,-1) node[pos=.5] {\(\rho_{x,i}\)};

\end{tikzpicture}
\caption{An illustration of the situation in Proposition \ref{a=b}: if the left $w_n$ is good, then the next $w_n$ is good if and only if $a = b$.}
\label{a=bimage}
\end{figure}
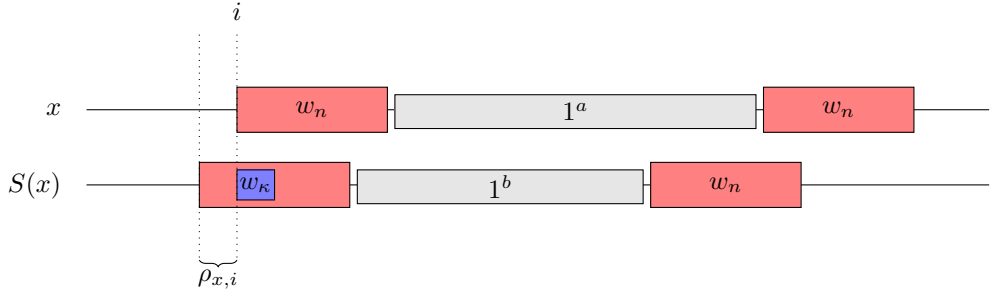

\begin{rmk} \label{remark}
An analogous argument proves that if a copy of $w_n$ is good, then the \lq\lq previous" copy of $w_n$ is good if and only if $a=b$ (where now $a$ and $b$ denote the lengths of the stretches of 1's to the left).
\end{rmk}

The following  lemma gives us a condition we use to show that  \(S\) must be a power of \(T\).

\begin{lem}
\label{goodlemma} Fix  $x\in X$ and  $n\in\N$ with $n>k$. 
If every copy of \(w_n\) in \(x\) is good, then there is $\ell\in\N\cup\{0\}$ such that
		\[S(x)=T^{\ell}(x).\]
\end{lem}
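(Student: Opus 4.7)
The plan is to extract a single constant offset $\rho$ from the family of offsets $\rho_{x,i}$ and show that $S(x)=T^{\rho}(x)$ pointwise. Working on the full-measure set where $w_n$ appears in $x$ at a bi-infinite sequence of positions $\cdots < i_{-1}<i_0<i_1<\cdots$, with runs of $1$'s of lengths $a_k := i_{k+1}-i_k-|w_n|$ between consecutive occurrences, each such occurrence is good by hypothesis and thus comes equipped with an offset $\rho_{x,i_k}\in[0,\,|w_n|-|w_\kappa|]$ and a corresponding copy of $w_n$ in $S(x)$ starting at $i_k-\rho_{x,i_k}$.

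The central step is to show that $\rho_{x,i_k}$ is independent of $k$. Here Proposition \ref{a=b} does the work: since the copy of $w_n$ at $i_{k+1}$ in $x$ is good, the run lengths in the block $w_n 1^{a_k} w_n$ at $i_k$ in $x$ and in the corresponding block $w_n 1^{b_k} w_n$ at $i_k-\rho_{x,i_k}$ in $S(x)$ must agree, so $a_k=b_k$. But the right-hand $w_n$ in $S(x)$ begins at $(i_k-\rho_{x,i_k})+|w_n|+b_k=i_{k+1}-\rho_{x,i_k}$, and this must be the very copy of $w_n$ that witnesses goodness of the occurrence at $i_{k+1}$, so $\rho_{x,i_{k+1}}=\rho_{x,i_k}$. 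Remark \ref{remark} handles the backward direction, giving that $\rho:=\rho_{x,i_k}$ is independent of $k$.

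With $\rho$ a single constant, the block $w_n 1^{a_k} w_n$ at position $i_k$ in $x$ coincides symbol-for-symbol with the block at position $i_k-\rho$ in $S(x)$, whence $S(x)(m)=x(m+\rho)$ for every $m\in[i_k-\rho,\,i_{k+1}-\rho)$. Since these intervals partition $\mathbb{Z}$ as $k$ varies, the identity holds for all $m$, and $S(x)=T^{\rho}(x)$; taking $\ell:=\rho\in\mathbb{N}\cup\{0\}$ finishes the argument. The main obstacle is purely the bookkeeping needed to glue the local information into a global identification, since the heavy lifting has already been carried out in Proposition \ref{a=b}.
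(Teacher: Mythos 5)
Your proof is correct and takes essentially the same route as the paper's: propagate the offset $\rho_{x,i}$ to every occurrence of $w_n$ to the right via Proposition \ref{a=b} and to the left via Remark \ref{remark}, then identify $S(x)$ with $T^{\rho}(x)$ using that $x$ has no first or last occurrence of $w_n$. You merely make explicit the bookkeeping (constancy of the offset and the covering of $\mathbb{Z}$ by the blocks $w_n 1^{a_k}$) that the paper leaves implicit.
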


\begin{proof}
Start with some good occurrence of $w_n$ at some $i$ in $x$. Then there is an occurrence of $w_n$ at $i-\rho_{x,i}$ in $S(x)$. Consider the next copy of $w_n$ in $x$ (at $i + |w_n| + a$ in the notation used in Proposition \ref{a=b}). Since it is good, Proposition \ref{a=b} implies that the next copy of $w_n$ in $S(x)$ occurs at $i-\rho_{x,i}+|w_n|$. We can repeat this argument for all the copies of $w_n$ occurring to the right of the $w_n$ we started out with. Moreover, by Remark \ref{remark}, we can do the same for the copies of $w_n$ occurring to the left. This proves that $S(x) = T^{\rho_{x,i}}(x)$ and we can set $\ell=\rho_{x,i}$. Here we use the fact that there is no first or last occurrence of $w_n$ in $x$.
\end{proof}

\section{Proof of the Main Theorem}\label{S:mainthm}

We are now equipped to prove our main result.

\begin{thm}
\label{thmMain}
If \(T\) is a partially bounded rank-one transformation, any transformation \(S' \in C(T)\) is a power of \(T\).
\end{thm}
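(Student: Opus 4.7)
The plan is to use Lemma~\ref{goodlemma} as the central reduction: it suffices to produce, for a.e.\ $x$, some $n > \kappa$ such that every occurrence of $w_n$ in $x$ is good, because then $S(x) = T^{\ell(x)}(x)$ pointwise. Once this identity holds on a set of positive measure, the commutation $S \circ T = T \circ S$ gives $\ell \circ T = \ell$ a.e., and ergodicity of $T$ forces $\ell$ to be an a.e.\ constant, yielding $S = T^\ell$.

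First I would ensure the existence of at least one good occurrence of $w_n$ in the orbit of a.e.\ $x$. The set $S^{-1}(B_\kappa)$ has positive finite measure, and the sets $\{T^j(B_n) \cap S^{-1}(B_\kappa) : 0 \leq j < h_n\}$ partition $S^{-1}(B_\kappa) \cap C_n$, which has positive measure for $n$ large. Hence $\mu(T^{j_0}(B_n) \cap S^{-1}(B_\kappa)) > 0$ for some $j_0$. Replacing $S$ by $T^{j_0} \circ S$ (which lies in $C(T)$, and whose being a power of $T$ forces the same for $S$), I may assume $j_0 = 0$, so that $\mu(B_n \cap S^{-1}(B_\kappa)) > 0$. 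Ergodicity of $T$ then guarantees that a.e.\ orbit visits this set, producing a good occurrence of $w_n$ at some position $i_0$ in $x$ with shift $\rho = \rho_{x,i_0}$.

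The main step is to propagate goodness from this single occurrence to every occurrence of $w_n$ in $x$. The tool is Proposition~\ref{a=b} and Remark~\ref{remark}, which say that the neighbors of a good $w_n$ are themselves good exactly when the flanking gap lengths in $x$ and $S(x)$ coincide. I would argue by induction on $N \geq n$ that the $w_N$-block of $x$ containing $i_0$ is aligned with a unique $w_N$-block of $S(x)$ at a position shifted by exactly $\rho$: the inductive step is a slot comparison showing that the $w_N$-blocks must sit at the \emph{same} slot inside their enclosing $w_{N+1}$-blocks in $x$ and in $S(x)$, so that the $w_{N+1}$-blocks themselves are aligned with shift $\rho$. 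Since the nested $w_N$-blocks exhaust $x$ as $N \to \infty$, every occurrence of $w_n$ in $x$ is good, completing the reduction.

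The hardest part is this inductive slot-matching, which is where all three partial boundedness conditions are needed: condition~(1) bounds the number of slots uniformly, so that only finitely many mismatches need to be ruled out; condition~(2) keeps intra-level gap differences strictly below $|w_\kappa|$, so that any slot shift within a level would displace the witnessed $w_\kappa$ in $S(x)$ by less than its length and destroy the good occurrence; and condition~(3) makes the spacers $s_N(\cdot)$ dominate $|w_N|$, so that a ``cross-level'' slot mismatch (where, for example, the $w_N$-block in $x$ sits at a non-terminal slot of its $w_{N+1}$ while the matched block in $S(x)$ sits at the terminal slot) is incompatible with the already-fixed shift $\rho$. Together these pin down the correct slot, close the induction, and finish the proof.
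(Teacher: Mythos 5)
Your overall architecture (reduce to Lemma~\ref{goodlemma}, then use ergodicity of $T$ and the $T$-invariance of the sets $\{x: S(x)=T^\ell(x)\}$ to get a single power) matches the paper, and your use of Proposition~\ref{a=b} as the local tool is right. But there is a genuine gap in the central step: a single good occurrence of $w_n$ does \emph{not} propagate to all occurrences, and your inductive slot-matching cannot be closed from that hypothesis alone. Concretely, suppose the distinguished $w_N$-block of $x$ sits at slot $k_1$ of its enclosing $w_{N+1}$ while the aligned $w_N$-block of $S(x)$ sits at slot $k_2\neq k_1$. The only thing this forces, via Proposition~\ref{a=b} (or Lemma~\ref{new a = b}), is that the \emph{neighboring} $w_N$-blocks of $x$ are bad (totally bad), because the flanking gap lengths $a$ and $b$ disagree. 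That is a perfectly consistent configuration, not a contradiction: you are trying to prove that all occurrences are good, so you cannot yet rule out that the neighbors are bad. Indeed, for \emph{any} measure-preserving $S$ whatsoever, conservativity and ergodicity give a.e.\ orbit infinitely many visits to $T^{j}(B_n)\cap S^{-1}(B_\kappa)$, hence infinitely many good occurrences; if one good occurrence propagated to all, every such $S$ would be a power of $T$, which is absurd (rigid rank-one transformations have uncountable centralizer, Proposition~\ref{propRigUncCen}). The missing ingredient is quantitative.

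What the paper actually does is: (i) use density of the cylinders $E_{w_n,i}$ in the measure algebra to choose $n$ with $\mu[E_{w_n,i}\cap (S')^{-1}(E_{w_\kappa,0})]/\mu[E_{w_n,i}]>1-\tfrac{1}{2\mathfrak R+1}$, and the Hopf ratio ergodic theorem to convert this into a lower bound on the asymptotic \emph{density} of good occurrences of $w_n$ along a.e.\ orbit; (ii) assume a bad occurrence exists, extract a maximal run of $d$ consecutive bad copies preceded by at least $2\mathfrak R\cdot d$ good copies; (iii) choose $m$ so that $w_m$ contains between $d+1$ and $\mathfrak R\cdot d$ copies of $w_n$, locate a totally good $w_m$ inside the good stretch, and propagate rightward using the totally-good/totally-bad dichotomy of Lemma~\ref{new a = b}: the first non-totally-good $w_m$ must be totally bad, which forces more than $d$ consecutive bad copies of $w_n$, a contradiction. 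Your proposal contains no analogue of the density threshold $1-\tfrac{1}{2\mathfrak R+1}$ or of the counting argument in (ii)--(iii), and without them the passage from ``some occurrence is good'' to ``every occurrence is good'' fails.
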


For \(m>n\), we shall say that a copy of $w_m$ is \emph{totally good} if every $w_n$ that occurs in it is good; we also say that a copy of $w_m$ is \emph{totally bad} if every $w_n$ that occurs in it is bad. Suppose that $x$ has a totally good occurrence of $w_m$ at $i$. Then $S(x)$ has an occurrence of $w_n$ at $i-\rho_{x,i}$. Moreover, by repeated application of Proposition \ref{a=b}, $S(x)$ has an occurrence of $w_m$ at $i-\rho_{x,i}$ (see Figure \ref{totallygoodimage}). The following lemma is the analogue of Proposition \ref{a=b} for totally good occurrences.

	
	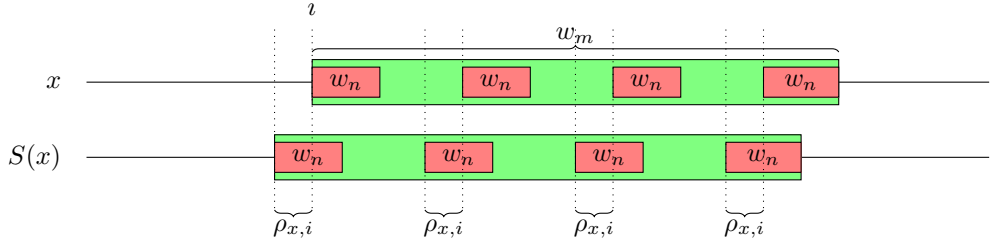
\begin{figure}
			\begin{tikzpicture}
				\node[left=6pt] at (0,1) {\(x\)};
				\node[left=6pt] at (0,0) {\(S(x)\)};
				\draw (0,1) -- (12,1);
				\draw (0,0) -- (12,0);
				\draw[fill=green!50!white] (3,.7) rectangle (10,1.3);
				\draw[fill=green!50!white] (2.5,-.3) rectangle (9.5,.3);
				\draw[fill=white!50!red] (3,.8) rectangle (3.9,1.2) node[pos=.5] {\(w_n\)};
				\draw[fill=white!50!red] (5,.8) rectangle (5.9,1.2) node[pos=.5] {\(w_n\)};
				\draw[fill=white!50!red] (7,.8) rectangle (7.9,1.2) node[pos=.5] {\(w_n\)};
				\draw[fill=white!50!red] (9,.8) rectangle (10,1.2) node[pos=.5] {\(w_n\)};
				\draw[fill=white!50!red] (2.5,-.2) rectangle (3.4,.2) node[pos=.5] {\(w_n\)};
				\draw[fill=white!50!red] (4.5,-.2) rectangle (5.4,.2) node[pos=.5] {\(w_n\)};
				\draw[fill=white!50!red] (6.5,-.2) rectangle (7.4,.2) node[pos=.5] {\(w_n\)};
				\draw[fill=white!50!red] (8.5,-.2) rectangle (9.5,.2) node[pos=.5] {\(w_n\)};
				\draw[dotted] (3,1.7) node[above=2pt] {\(i\)}  -- (3,-.7);
				\draw[dotted] (2.5,1.7) -- (2.5,-.7);
				\draw[decorate,decoration={brace,mirror},below=3pt] (2.5,-.7) -- (3,-.7) node[pos=.5] (a) {\(\rho_{x,i}\)};
				\draw[decorate,decoration={brace,mirror},below=3pt] (4.5,-.7) -- (5,-.7) node[pos=.5] (a) {\(\rho_{x,i}\)};
				\draw[decorate,decoration={brace,mirror},below=3pt] (6.5,-.7) -- (7,-.7) node[pos=.5] (a) {\(\rho_{x,i}\)};
				\draw[decorate,decoration={brace,mirror},below=3pt] (8.5,-.7) -- (9,-.7) node[pos=.5] (a) {\(\rho_{x,i}\)};
				\draw[dotted] (4.5,1.7) -- (4.5,-.7);
				\draw[dotted] (5,1.7) -- (5,-.7);
				\draw[dotted] (6.5,1.7) -- (6.5,-.7);
				\draw[dotted] (7,1.7) -- (7,-.7);
				\draw[dotted] (8.5,1.7) -- (8.5,-.7);
				\draw[dotted] (9,1.7) -- (9,-.7);
				
				\draw[decorate,decoration=brace,above=3pt] (3, 1.4) -- (10,1.4) node[pos=.5] {\(w_m\)};
			\end{tikzpicture}
			\caption{An illustration of a totally good occurrence of $w_m$ in $x$.}
			\label{totallygoodimage}
		\end{figure}

The next lemma is the analogue of Proposition~\ref{a=b} for totally good occurrences, and its proof is very similar.
\begin{lem}
\label{new a = b}
Let $x \in X$. Suppose that there is a totally good occurrence of $w_m$ at $i$ in $x$. Write $w_m 1^a w_m$ for the string beginning at $i$ in $x$ and $w_m 1^b w_m$ for the string beginning at $i - \rho_{x,i}$ in $S(x)$. Then the ``next" $w_m$ occurring at $i + |w_m| + a$ in $x$ is totally good if $a=b$, and is totally bad if $a \neq b$.
\end{lem}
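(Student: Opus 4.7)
The plan is to adapt the argument of Proposition~\ref{a=b} from individual $w_n$ occurrences to entire $w_m$ occurrences, treating total goodness and total badness as collective properties of all $w_n$'s inside a $w_m$. The paper's own remark that the proof is ``very similar'' suggests mirroring the case split of Proposition~\ref{a=b}.

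\emph{Case $a = b$.} I would argue directly by alignment. Since $a = b$, the next $w_m$ of $x$ at position $i + |w_m| + a$ lines up exactly with a copy of $w_m$ in $S(x)$ beginning at position $(i + |w_m| + a) - \rho_{x,i}$. Because the first $w_m$ of $x$ is totally good, its first $w_n$ (at position $i$) is good, which means $w_\kappa$ appears at offset $\rho_{x,i}$ within the corresponding $w_n$ of $S(x)$ at position $i - \rho_{x,i}$; since $w_n$ is a fixed word, $w_\kappa$ must then occur at offset $\rho_{x,i}$ inside every copy of $w_n$. Hence, for each $w_n$ at offset $p$ within the next $w_m$ of $x$, the correspondingly-placed $w_n$ within the next $w_m$ of $S(x)$ contributes a $w_\kappa$ at absolute position $i + |w_m| + a + p$ in $S(x)$. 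Every such $w_n$ is good, so the next $w_m$ is totally good.

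\emph{Case $a \neq b$.} Applying Proposition~\ref{a=b} to the last $w_n$ of the first (totally good) $w_m$ immediately yields that the first $w_n$ of the next $w_m$ of $x$ is bad. To upgrade this to every $w_n$ inside the next $w_m$, I would suppose for contradiction that some $w_n$ at offset $p > 0$ within the next $w_m$ of $x$ is good with shift $\rho^*$, and then iteratively apply the backward analogue of Proposition~\ref{a=b} (Remark~\ref{remark}) leftward through the $w_n$'s of the next $w_m$. At each step, the 1-stretch between consecutive $w_n$'s in $x$ is internal to the fixed word $w_m$, hence identical to the corresponding 1-stretch in $S(x)$---provided the corresponding $w_n$'s in $S(x)$ lie within a single copy of $w_m$. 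Goodness then propagates all the way to the first $w_n$ of the next $w_m$, contradicting what was just proved.

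\emph{Main obstacle.} The delicate step is justifying that the shift $\rho^*$ of any hypothetically good interior $w_n$ must equal $\rho_{x,i}$, so that the chain of corresponding $w_n$'s in $S(x)$ actually sits inside the next $w_m$ of $S(x)$ (making the 1-stretches match). This is where the three-case analysis of Proposition~\ref{a=b} reenters, now with $a = s_{m_1}(\ell_1)$, $b = s_{m_2}(\ell_2)$, and $m_1, m_2 \geq m$: the same-level case ($m_1 = m_2$) is ruled out by condition~(2) together with $|w_\kappa| > \mathfrak S$, while the mixed-level cases ($m_1 \neq m_2$) are ruled out by condition~(3), which forces inter-$w_m$ spacers to dwarf anything internal to a single $w_m$. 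Once $\rho^* = \rho_{x,i}$ is forced, the backward propagation completes the proof.
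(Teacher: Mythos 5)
Your treatment of the case $a=b$ is fine and fills in the alignment detail that the paper dismisses with ``clearly.'' The gap is in the case $a \neq b$. Your scheme is: (i) the first $w_n$ of the next $w_m$ is bad, by Proposition~\ref{a=b}; (ii) if some interior $w_n$ were good, backward propagation via Remark~\ref{remark} would make the first $w_n$ good, a contradiction. Step (ii) does not work as stated: Remark~\ref{remark} says the previous copy is good \emph{if and only if} the two 1-stretches agree, so from a hypothetical good $w_n$ at offset $p>0$ you can only conclude that the $w_n$ at offset $p-1$ is good \emph{when} the stretch to its left in $x$ matches the stretch to the left of its counterpart in $S(x)$; if they do not match, the previous copy is bad, the propagation simply stops, and no contradiction is reached. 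Knowing that the $x$-side stretch is internal to $w_m$ tells you nothing about the $S(x)$-side stretch unless you already know where the counterpart $w_n$ sits inside the $w_m$/spacer decomposition of $S(x)$ --- and pinning that down is the entire content of the lemma. Your proposed fix, that the three-case analysis ``forces $\rho^* = \rho_{x,i}$,'' is neither justified nor the right conclusion to aim for.

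What the paper actually does is run the three cases on the inter-$w_m$ stretches $a = s_{m_1}(\ell_1)$, $b = s_{m_2}(\ell_2)$ (with $m_1, m_2 \geq m$) and read off total badness \emph{directly}, with no propagation: if $m_1 = m_2$, the next $w_m$ of $S(x)$ is displaced by $0 < |a-b| < \mathfrak S < |w_\kappa|$ from the alignment achieved over the first (totally good) $w_m$, and since distinct expected occurrences of $w_\kappa$ are at least $|w_\kappa|$ apart, no constituent $w_n$ of the next $w_m$ of $x$ can sit over an occurrence of $w_\kappa$ in $S(x)$; if $m_1 < m_2$ or $m_1 > m_2$, condition (3) (together with (2) in the latter case) shows that the entire next $w_m$ of $x$ lies over a single long run of 1's in $S(x)$, so again every constituent $w_n$ is bad. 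You identified exactly these ingredients in your ``main obstacle'' paragraph; the repair is to let them prove total badness outright and discard the first-$w_n$-plus-backward-propagation scaffolding, which cannot be made to close.
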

\begin{proof}
\setcounter{case}{0}
If $a = b$, then clearly the next $w_m$ will be totally good. Now suppose that $a \not = b$. Write $a = s_{m_1}(\ell_1)$ and \(\ell_1\in\lb0,r_{m_1}-1\rb\), $\ell_2\in\lb0,r_{m_2}-1\rb\). We distinguish three cases:
\begin{case}Suppose that $m_1 = m_2$. By Condition (2) of Definition \ref{defParBou}, $0 < |a - b| < \mathfrak{S}$. Since $|w_\kappa| > \mathfrak{S}$, all the $w_n$ composing the $w_m$ at $i+|w_m|+a$ in $x$ will be bad.
\end{case}
\begin{case}
Suppose that $m_1 < m_2$. Then by Condition (3) of Definition \ref{defParBou}, there is a long stretch of 1's in $S(x)$ that begins at $i - \rho_{x,i} + |w_m|$ so that all the $w_n$'s composing the $w_m$ at $i+|w_m|+a$ in $x$ will be bad.
\end{case}
\begin{case}
Suppose that $m_1 > m_2$. Consider the copy of $w_{m_1}$ that contains the copy of $w_n$ at $i - \rho_{x,i}$ in $S(x)$. By Condition (3), this copy must end at some $j < i + |w_m| + a$. Then there is a long stretch of 1's of length $s_{m_1}(\ell_3)$ that begins at $j+1$ in $S(x)$ and which, by conditions (2) and (3), will cause all the $w_n$ composing the following copy of $w_m$ to be bad.
\end{case}
\end{proof}

\begin{proof}[Proof of Theorem~\ref{thmMain}]
Using that the sets
	\begin{equation}
	\label{fineq}
		E_{w_n,i} = \{x \in X: x \text{ has an occurrence of $w_n$ at $i$}\}=T^i(B_n)
	\end{equation}
are dense in the measure algebra, we can find $n > k$ such that
	\[ \frac{\mu[ E_{w_n,i} \cap (S')^{-1}(E_{w_\kappa,0}) ]}{\mu[ E_{w_n,i}]} > 1 - \frac{1}{2\mathfrak R+1} .\]
Let $S = S' \circ T^{-i}$. Then $S$ commutes with $T$, and $S$ is a power of $T$ if and only if $S'$ is a power of $T$. Moreover, by Equation \ref{fineq},
\[ E_{w_n,i} \cap (S')^{-1}(E_{w_\kappa,0}) = T^{-i} (E_{w_n,0} \cap S^{-1}(E_{w_\kappa,0})).\]
Thus,
\[ \frac{\mu[ E_{w_n,0} \cap S^{-1}(E_{w_\kappa,0}) ]}{\mu[ E_{w_n,i}]} > 1 - \frac{1}{2\mathfrak R+1}.\]
Note that $T^i(x) \in E_{w_n,0} \cap S^{-1}(E_{w_\kappa,0})$ exactly when $x$ has a good copy of $w_n$ at $i$ (with respect to S). Thus by the Hopf ratio ergodic theorem, for a.e. $x \in X$,
	\begin{equation}
	\label{eq1}
		\lim_{N \to \infty} \frac{ \text{$\#|$\{good occurrences of $w_n$ in $x\lb0,N\rb\}|$}}{ \text{$\#|$\{occurrences of $w_n$ in $x\lb0,N\rb\}|$}} > 1 - \frac{1}{2\mathfrak R+1}.
	\end{equation}

We prove that there cannot be a bad occurrence of $w_n$ for a.e. $x$. If there is a bad occurrence of $w_n$ in $x$, then there must exist a stretch of $d$ consecutive bad occurrences of $w_n$, preceded by at least $2\mathfrak R\cdot d$ good occurrences of $w_n$ and followed by at least one good occurrence of $w_n$. Otherwise, Inequality \ref{eq1} would fail. (See Figure \ref{situation}.) We need to show that such a situation is impossible.

Choose $m > n$ so that
	\[\#|\{\text{copies of $w_n$ in $w_{m-1}$}\}|\leq d <\#|\{\text{copies of $w_n$ in $w_m$}\}|\]
Thus the number of $w_n$ in $w_m$ is at most $\mathfrak R\cdot d$. It follows that there is an entire copy of $w_m$ contained in the stretch of $2\mathfrak R\cdot d$ good occurrences of $w_n$. Thus, this $w_m$ must be totally good. By Lemma \ref{new a = b}, the next $w_m$ must be either totally good or totally bad. If it is good, then the next $w_m$ must be either totally good or totally bad, and so on. We shall only reach a totally bad copy of $w_m$ when we get to the stretch of $d$ bad copies of $w_n$. However, since \(d\) is less than the number of copies of $w_n$ in $w_m$, a totally bad copy of $w_m$ would require more than $d$ bad copies of $w_n$. This yields a contradiction.

By Lemma \ref{goodlemma}, we obtain that for a.e. $x$, $S(x) = T^\ell(x)$ for some $\ell$ possibly depending on $x$. We conclude the proof by noting that for each \(\ell \in\B Z\), the sets \(A_\ell=\{x\in X: S(x)=T^{\ell}(x)\}\) are invariant under \(T\) by commutativity of \(S\) and \(T\), and \(X = \bigcup_{\ell \in\B Z} A_n\). So by ergodicity, one of them has null complement. Therefore, \(S=T^\rho\) a.e.~for some \(\rho\). 
\end{proof}

 \begin{figure}
	\begin{tikzpicture}
			\draw[left color=white, right color=red!70!white,draw=none] (-.4,-.2) rectangle (0,.2);
			\draw[fill=red!70!white,draw=none] (0,-.2) rectangle (1,.2) node[pos=.5] {bad};
			\draw[fill=green!70!white,draw=none] (1,-.2) rectangle (6,.2) node[pos=.5] {good};
			\draw[fill=red!70!white,draw=none] (6,-.2) rectangle (7,.2) node[pos=.5] {bad};
			\draw[fill=green!70!white,draw=none] (7,-.2) rectangle (9,.2) node[pos=.5] {good};
			\draw[fill=red!70!white,draw=none] (9,-.2) rectangle (10,.2) node[pos=.5] {bad};
			\draw[right color=white, left color=red!70!white,draw=none] (10,-.2) rectangle (10.4,.2);
			\draw[decorate,decoration={brace},above=3pt] (6,.2) -- (7,.2) node[pos=.5] {\(d\)-many copies};
			\draw[decorate,decoration={brace,mirror},below=3pt] (1,-.2) -- (6,-.2) node[pos=.5] {\(\geq 2 \mathfrak{R} \cdot d\)-many copies};
		\end{tikzpicture}
		\caption{An illustration of the stretch of copies considered in the proof of Theorem \ref{thmMain}.}
\label{situation}
\end{figure}
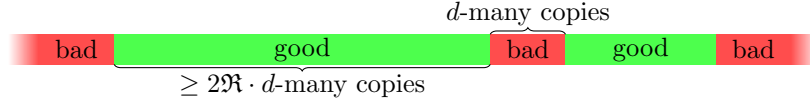

\begin{rmk} We note that when $S$ is measure-preserving, our proof does not require $S$ to 
be invertible. Also, when $S$ is nonsingular and invertible, by \cite{BSSSW15}, since \(T\) is rank-one, $S$ must be  measure preserving.
\end{rmk}

\begin{rmk} 
We have also  extended our arguments to show that a  class of infinite rank-one flows, which we call \emph{partially bounded}, have trivial centralizers. In this context, the \(T\)--\(P\) names of points are maps from $\R$ into $\{0,1\}$ instead of being maps from $\Z$ into $\{0,1\}$. The details will appear in a forthcoming paper, along with further generalizations to partially bounded $\Z^d$-actions and canonically bounded flows.
\end{rmk}

\section{Partially bounded transformations isomorphic to their inverse}\label{S:isoinv}

In this section we extend the methods of \cite{Hi16} to characterize when a partially bounded transformation is isomorphic to its inverse. Since we have already assumed that \(s_n(r_n-1)=0\), from this point on, we shall adopt the convention that the spacer parameter $(s_n)$ is a sequence of $(r_n-1)$-tuples, instead of $r_n$-tuples. Thus, $s_n = (s_n(0), s_n(1), \dots, s_n(r_n-2))$. We prove the following theorem.  

\begin{thm} \label{theorem}
Let $T$ be a partially bounded rank-one transformation with parameters $(r_n)$ and $(s_n)$. Then $T$ is isomorphic to $T^{-1}$ if and only if there exists $N \in \N$ such that for all $n \geq N$, $s_n = \overline{s_n}$ (where $\overline{s_n}$ denotes the reverse of $s_n$, i.e. $s_n(i) = s_n(r_n-2-i)$).
\end{thm}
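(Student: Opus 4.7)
The plan is to argue both directions by combining the symbolic framework of Theorem~\ref{thmMain} with constructions adapted from \cite{Hi16}. I identify $X$ with $S(W)$ throughout, and let $R\colon\{0,1\}^{\B Z}\to\{0,1\}^{\B Z}$ denote the bit-reversal $R(x)(i)=x(-i)$, which conjugates the shift $\sigma$ to $\sigma^{-1}$.

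For the \emph{if} direction, assume $s_n=\overline{s_n}$ for all $n\geq N$. Reversing the rank-one recursion yields $\overline{w_{n+1}}=\overline{w_n}\,1^{s_n(r_n-2)}\,\overline{w_n}\cdots 1^{s_n(0)}\,\overline{w_n}$, which under the palindromic hypothesis uses the identical spacer pattern to $w_{n+1}$ with $\overline{w_n}$ in place of $w_n$. I would use this to produce an explicit isomorphism $T\to T^{-1}$ by either (a) collapsing the first $N$ cutting-and-stacking stages into a single combined stage and arranging the resulting base word to be a palindrome, so that induction forces every $w_n$ with $n\ge N$ to be a palindrome and $R$ itself serves as the isomorphism of $S(W)$, or (b) constructing a geometric ``column-flip'' isomorphism as the measure-theoretic limit of level-reversing maps $\phi_n\colon C_n\to C_n$, whose consistency across refinements is guaranteed precisely by the palindromicity of the spacers from stage $N$ onward.

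For the \emph{only if} direction, let $\phi\colon X\to X$ be measure-preserving with $\phi\circ T=T^{-1}\circ\phi$. I would declare an occurrence of $w_n$ at $i$ in $x$ to be \emph{good} if $\phi(x)$ has an occurrence of $\overline{w_\kappa}$ at the mirror position $-i-|w_\kappa|+1$. Because $\phi\sigma^i=\sigma^{-i}\phi$, the Hopf ratio ergodic theorem, applied exactly as in the proof of Theorem~\ref{thmMain}, yields for appropriate $n>\kappa$ a typical $x$ whose fraction of good $w_n$-occurrences exceeds $1-\tfrac{1}{2\mathfrak R+1}$. The dual of Proposition~\ref{a=b} then says: for two consecutive good $w_n$-occurrences at distance $|w_n|+a$ in $x$, the corresponding $\overline{w_\kappa}$'s in $\phi(x)$ lie at distance $|w_n|+b$, and the same three-case argument ($m_1<,=,>m_2$ with $a=s_{m_1}(\ell_1)$, $b=s_{m_2}(\ell_2)$) using conditions (1)--(3) of Definition~\ref{defParBou} forces $a=b$. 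Combined with the totally-good/totally-bad analogue of Lemma~\ref{new a = b}, this yields the key combinatorial output: the spacer sequence $s_n(0),\ldots,s_n(r_n-2)$ traversed left-to-right by good $w_n$'s in $x$ must agree with the same sequence traversed right-to-left by the $\overline{w_\kappa}$'s in $\phi(x)$, giving $s_n(i)=s_n(r_n-2-i)$ for every $i$ and every sufficiently large $n$.

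The main obstacle is the \emph{only if} direction, specifically the rigidity argument that pins the mirror correspondence to exactly the index flip $i\mapsto r_n-2-i$. Concretely, one must verify that within each rank-one stage the $(r_n-1)$ spacer slots traversed in $x$ are in one-to-one correspondence with those traversed in $\phi(x)$ in strict reverse order; any off-by-one alignment error would produce a different symmetry condition on the spacers. A secondary technical point on the \emph{if} side is justifying the consistency of the constructed isomorphism across refinements of the rank-one columns, which itself is equivalent to the palindromic spacer hypothesis.
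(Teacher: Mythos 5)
Your \emph{if} direction is essentially the paper's: the paper observes that $T^{-1}$ is isomorphic to the rank-one system built from $(r_n)$ and $(\overline{s_n})$, and when $s_n=\overline{s_n}$ for $n\geq N$ the two systems share all parameters from stage $N$ on, so the map replacing each expected occurrence of $v_N$ by the corresponding word of the reversed system is an isomorphism (a ``stable isomorphism'' in the sense of Gao--Hill). Your option (b) is a geometric rendering of this and is fine; your option (a) is not, since $w_N$ is determined by the first $N$ stages and cannot be ``arranged'' to be a palindrome, so the bit-reversal $R$ alone does not map $S(W)$ to itself.

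The \emph{only if} direction has a genuine gap, and it sits exactly at the point you flag as ``the main obstacle.'' Your mirrored good/bad machinery, Hopf's ratio ergodic theorem, and the dual of Proposition~\ref{a=b} do yield (as in the paper's Proposition~\ref{prop}) that a totally good occurrence of $v_m=v_n1^{s_n(0)}v_n\cdots 1^{s_n(r_n-2)}v_n$ in $x$ forces the string with the \emph{same} spacer pattern $s_n$ to occur somewhere in $\phi(x)$ --- but at an uncontrolled position relative to the block structure of the reversed system. The legal spacer patterns of length $r_n-1$ occurring in the reversed system are precisely the substrings of $\overline{s_n}\,c\,\overline{s_n}$ for various $c$. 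So what you actually obtain is that $s_n$ is \emph{compatible} with $\overline{s_n}$ in the sense of Definition~\ref{perp}, not that $s_n=\overline{s_n}$: for instance $s_n=(1,2)$ is a substring of $(2,1,2,2,1)=\overline{s_n}\,2\,\overline{s_n}$, so compatibility holds even though $s_n\neq\overline{s_n}$. Your claimed ``key combinatorial output'' that the matching pins the index flip $i\mapsto r_n-2-i$ is therefore false as stated; the off-by-one alignment you worry about genuinely occurs. The paper's resolution is a separate combinatorial idea you are missing: telescope three consecutive stages and invoke Lemma~\ref{lemma} (Lemma 2.2 of \cite{Hi16}), which says that if $s_n\neq\overline{s_n}$ and $s_{n+2}*s_{n+1}$ is nonconstant (guaranteed here by condition (3) of partial boundedness), then $(s_{n+2}*s_{n+1})*s_n$ and its reverse are \emph{incompatible}. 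Running your argument at the telescoped scale (this is the content of Proposition~\ref{prop} applied along the subsequence $u_\ell$, with the bound $q_\ell<\mathfrak R^3$ replacing $\mathfrak R$) then produces an honest contradiction. Without this telescoping step your proof does not close.
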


\begin{defn} \label{perp}
Let $s$ and $s'$ be two finite sequences of integers of the same length. We say that $s$ and $s'$ are \emph{incompatible} if there does not exist an integer $c$ such that $s$ is a substring of $s' c s'$. This is a symmetric relation. 
\end{defn}

\begin{prop} \label{incompatible}
Suppose $(X, \mu, T)$ and $(Y, \nu, S)$ are two rank-one systems with respective parameters $(r_n), (s_n)$ and $(r_n), (s_n')$ (with no spacers added to the last subcolumn). Let $(v_n)$ and $(w_n)$ be the corresponding sequences of rank-one words. If for some $n$, $s_n$ and $ s_n'$ are incompatible, then the string
\[ w_n 1^{s_n(0)} w_n 1^{s_n(1)} \cdots 1^{s_n(r_n-2)} w_n \]
cannot occur in any $y \in Y$.
\end{prop}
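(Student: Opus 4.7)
The plan is to proceed by contrapositive: I will show that whenever
\[ u := w_n 1^{s_n(0)} w_n 1^{s_n(1)} \cdots 1^{s_n(r_n-2)} w_n \]
occurs in some $y \in Y$, there exists an integer $c$ such that $s_n$ is a substring of $s_n' c s_n'$. Since every finite subword of a point in $S(W')$ also appears in the infinite rank-one word $W'$ of $Y$, it suffices to work directly with occurrences of $u$ in $W'$.

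The heart of the argument is a counting observation. The word $w_{n+1} = w_n 1^{s_n'(0)} w_n \cdots 1^{s_n'(r_n-2)} w_n$ contains exactly $r_n$ copies of $w_n$, and an occurrence of $u$ in $W'$ supplies $r_n$ consecutive occurrences of $w_n$. These $r_n$ copies must either lie entirely within a single $w_{n+1}$ block of the inductive nesting in $W'$, or straddle exactly two adjacent $w_{n+1}$ blocks; three or more blocks is impossible because a fully enclosed middle $w_{n+1}$ already supplies all $r_n$ copies and leaves no room for the flanking contributions.

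In the single-block case the separation pattern between consecutive $w_n$'s in $u$ must be $s_n'$, so $s_n = s_n'$ and $s_n$ is trivially a substring of $s_n' c s_n'$ for any $c$. In the two-block case, if $k \in \{1, \ldots, r_n - 1\}$ of the copies lie in the right block and $r_n - k$ in the left, then the sequence of separations reads
\[ s_n'(k), s_n'(k+1), \ldots, s_n'(r_n - 2),\ c,\ s_n'(0), s_n'(1), \ldots, s_n'(k-2), \]
where $c$ is the length of the $1$-stretch separating the two $w_{n+1}$ blocks (supplied by a higher-level spacer $s_m'(j)$ for some $m \geq n + 1$, depending on how deeply the two blocks are nested). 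This is precisely the length-$(r_n - 1)$ substring of $s_n' c s_n'$ starting at position $k$, which gives the compatibility needed for the contradiction.

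The main obstacle I anticipate is justifying that the $r_n$ consecutive copies of $w_n$ coming from an occurrence of $u$ are really the copies supplied by the inductive nesting in $W'$, rather than ``unexpected'' occurrences shifted from their predicted positions. For the intended application to Theorem~\ref{theorem} the relevant target systems are partially bounded and this is immediate from Lemma~\ref{lemParBouUni}; in the generality stated here one argues it by induction on the rank-one structure of $W'$, observing that no unexpected $w_n$ can fit between two expected occurrences without contradicting the length of the intervening $1$-stretch dictated by the definition of $w_{n+1}$.
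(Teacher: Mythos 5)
First, a point of reference: the paper states Proposition~\ref{incompatible} without any proof, so there is no argument of record to compare yours against; I am judging your write-up on its own terms. Your overall strategy is the right one --- pass to the contrapositive, read off the separation sequence of the $r_n$ copies of $w_n$ inside an occurrence of $u$, and show it is a length-$(r_n-1)$ window of the bi-infinite gap sequence $\cdots s_n'\,c_1\,s_n'\,c_2\,s_n'\cdots$, hence a substring of $s_n'\,c\,s_n'$ --- and your bookkeeping in the two-block case is correct. The genuine gap is precisely the one you flag in your last paragraph, and your proposed repair does not work. Both the claim that $w_{n+1}'$ contains \emph{exactly} $r_n$ copies of $w_n$ and the claim that ``no unexpected $w_n$ can fit between two expected occurrences without contradicting the length of the intervening $1$-stretch'' are false for general rank-one words: take $w_n=010$, $r_n=2$, $s_n'(0)=1$, so that $w_{n+1}'=0101010$, which contains $w_n$ at positions $0$, $2$ and $4$, the occurrence at $2$ being unexpected and sitting squarely between the two expected ones. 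Worse, such unexpected occurrences can chain: with all higher spacers equal to $1$, the occurrences at $2,6,10,\dots$ form an arbitrarily long run of unexpected copies of $w_n$ separated by pure $1$-stretches. So an occurrence of $u$ could a priori be assembled entirely out of unexpected copies, in which case its gap sequence is not read off the spacer parameters of $Y$ at all, and your counting argument (which silently identifies the copies of $w_n$ in $u$ with consecutive expected occurrences) says nothing about it. For the proposition in the generality in which it is stated, this case needs a real argument that your sketch does not supply.

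What does work, and what you correctly point out, is that in the only place the proposition is invoked --- inside the proof of Proposition~\ref{prop} --- condition (3) there gives $s_n'(i)\geq |w_n|$, which is exactly the hypothesis behind Lemma~\ref{lemParBouUni}(2); hence every occurrence of $w_n$ in every $y\in Y$ is expected. Granting that, the expected occurrences of $w_n$ are pairwise disjoint and separated by pure $1$-stretches, so the $r_n$ copies in $u$ are $r_n$ \emph{consecutive} expected occurrences; these come grouped into runs of $r_n$ per expected $w_{n+1}'$, so a window of $r_n$ of them meets at most two runs (because a single run already has $r_n$ members --- this, rather than your ``fully enclosed middle block'' count, is the clean justification), and your single-block/straddling dichotomy then yields a substring of $s_n'\,c\,s_n'$ as claimed. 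So your argument is adequate for the proposition as it is actually used in the paper, but as a proof of the statement as literally written, for arbitrary rank-one parameters, it has a hole where the unexpected occurrences are dismissed.
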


The next proposition is a direct generalization of Proposition 2.1 in \cite{Hi16}; it provides sufficient conditions on pairs of rank-one transformations to guarantee non-isomorphism.

\begin{prop} \label{prop}
Let $(X, \mu, T)$ and $(Y, \nu, S)$ be two rank-one systems, and let $(r_n), (s_n)$ and $(r_n'), (s_n')$ be their respective cutting and spacer parameters. Write $(v_n)$ and $(w_n)$ for their respective sequences of rank-one words. Suppose the following hold:
\begin{enumerate}
\item For all $n$, $r_n = r_n'$ and $\sum_{i=0}^{r_n-2} s_n(i) = \sum_{i=0}^{r_n'-2} s_n'(i)$. In particular, $|v_n| = |w_n|$ for all $n$. (We say that the parameters are \emph{commensurable}.)
\item There is $\mathfrak S>0$ such that for all $n$ and $0 \leq i,j< r_n-1$, $|s_n(i)-s_n'(j)| <\mathfrak S$.
\item For all $n$ and $0 \leq  i,j < r_n-1$, $s_n(i) \geq |v_n|$ and $s_n'(i) \geq |w_n|$.
\item There exists a subsequence of rank-one words $(v_{n_\ell})$ and $(w_{n_\ell})$ with corresponding cutting and spacer parameters $(q_\ell)$, $(t_\ell)$ and $(q_\ell)$, $(t_\ell')$, respectively, such that for some $\mathfrak{Q} > 0$ and infinite set $\mathbb{M} \subset \N$ we have for all $\ell \in \mathbb{M}$, \(q_\ell < \mathfrak{Q}\) and $t_\ell$ and $t_\ell'$ are incompatible.
\end{enumerate}
Then $X$ and $Y$ are not isomorphic.
\end{prop}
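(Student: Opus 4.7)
The plan is to assume for contradiction that there is a measure-preserving isomorphism $\phi \colon X \to Y$ with $\phi \circ T = S \circ \phi$, and to derive a contradiction by combining an adaptation of the proof of Theorem~\ref{thmMain} with Proposition~\ref{incompatible}. Conditions (1)--(3) guarantee that both $(X,\mu,T)$ and $(Y,\nu,S)$ are partially bounded, so by Lemma~\ref{lemParBouUni} I identify points with their $T$--$P$ and $S$--$P$ names. Fix $\kappa$ large enough that $|v_\kappa| = |w_\kappa| > \mathfrak{S}$, using $|v_n|=|w_n|$ from commensurability.

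I would first set up the cross-transformation analog of the good/bad framework from Section~\ref{S:prelim}: given integers $i, j$ (to be fixed below), declare an occurrence of $v_n$ at $i'$ in $x$ to be \emph{good} (relative to $\phi$) if $\phi(x)$ has an occurrence of $w_\kappa$ at $i' - i + j$. The cross-transformation analog of Proposition~\ref{a=b} then says: if a $v_n$ at $i'$ in $x$ is good, there is a unique $w_n$ in $\phi(x)$ at $i' - i + j - \rho$ with $0 \leq \rho \leq |w_n| - |w_\kappa|$, and the next $v_n$ in $x$ is good if and only if the spacer between it and the current $v_n$ in $x$ equals the corresponding spacer between the two $w_n$'s in $\phi(x)$. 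The proof mirrors the original three-case analysis verbatim: Condition (2) (bounding $|s_n(i) - s_n'(j)|$) replaces the self-comparison bound used in partial boundedness, and Condition (3) handles the cases in which the two spacers come from different cutting levels.

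Next, I apply density of cylinder sets in the measure algebra as in the proof of Theorem~\ref{thmMain} (valid because $\phi^{-1}(E_{w_\kappa,0})$ has finite measure), selecting $\ell \in \mathbb{M}$ sufficiently large and finding $i, j \in \mathbb{Z}$ with
\[
\frac{\mu[E_{v_{n_\ell},i} \cap \phi^{-1}(E_{w_\kappa,j})]}{\mu[E_{v_{n_\ell},i}]} > 1 - \frac{1}{2\mathfrak{Q}}.
\]
By the Hopf ratio ergodic theorem, for a.e.\ $x$ the asymptotic fraction of good $v_{n_\ell}$'s among all $v_{n_\ell}$'s in $x\lb 0, N\rb$ exceeds $1 - 1/(2\mathfrak{Q})$. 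Since each bad $v_{n_\ell}$ lies in at most one $v_{n_\ell+1}$ and $q_\ell \leq \mathfrak{Q}$, the asymptotic fraction of \emph{totally good} $v_{n_\ell+1}$'s (those whose constituent $v_{n_\ell}$'s are all good) exceeds $1 - q_\ell/(2\mathfrak{Q}) \geq 1/2$, so for a.e.\ $x$ at least one totally good $v_{n_\ell+1}$ occurs somewhere in $x$.

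Finally, iterating the cross-transformation analog of Proposition~\ref{a=b} across the $q_\ell$ successive good $v_{n_\ell}$'s making up such a totally good $v_{n_\ell+1}$ forces the spacer pattern $t_\ell$ inside this block to coincide with the spacers between the corresponding $w_{n_\ell}$'s in $\phi(x)$; consequently $\phi(x) \in Y$ contains the substring
\[
w_{n_\ell} 1^{t_\ell(0)} w_{n_\ell} 1^{t_\ell(1)} \cdots 1^{t_\ell(q_\ell - 2)} w_{n_\ell}.
\]
Applying Proposition~\ref{incompatible} at the subsequence level (with cutting parameter $q_\ell$ from condition (1) and the incompatible pair $t_\ell, t_\ell'$ from condition (4)), this substring cannot occur in any element of $Y$, which is a contradiction. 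The main technical obstacle is the cross-transformation version of Proposition~\ref{a=b}: although the structure is identical to the original, one must verify that each appeal to conditions (2)--(3) still works when the two spacers being compared come from distinct sequences $(s_n)$ and $(s_n')$, and keep track of the fixed shift $(i,j)$ throughout the propagation argument.
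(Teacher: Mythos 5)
Your proposal is correct and follows essentially the same route as the paper: contradiction via an assumed isomorphism, a cross-transformation good/bad framework with the three-case analogue of Proposition~\ref{a=b}, density of cylinders plus the Hopf ratio ergodic theorem to produce a totally good occurrence of $v_{n_{\ell+1}}$, propagation to force the forbidden string $w_{n_\ell}1^{t_\ell(0)}\cdots 1^{t_\ell(q_\ell-2)}w_{n_\ell}$ in $\phi(x)$, and Proposition~\ref{incompatible} for the contradiction. Your explicit counting with the threshold $1-1/(2\mathfrak{Q})$ is in fact a cleaner justification of the existence of a totally good block than the paper's (which quotes $1-1/\mathfrak{R}$ without elaboration).
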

\begin{proof}
Assume for contradiction that there exists an isomorphism $\phi: X \to Y$ so that $\phi \circ T = S \circ \phi$. Choose $k$ such that $|v_\kappa| = |w_\kappa| > \mathfrak S$. Then we can find $n = n_\ell$ with $\ell \in \mathbb{M}$, $n > k$, and $j \in \Z$ such that
	\begin{equation}
	\label{limit}
		\frac{ \mu[ E_{v_n,j} \cap \phi^{-1}(E_{w_\kappa, 0})]}{\mu[E_{v_n,j}]} > 1 - \frac{1}{\mathfrak R}.
	\end{equation}
Let $k, n \in \mathbb{N}$ and $j \in \Z$ be as above. Similarly to what we have defined before, say that an occurrence of $v_n$ at $i \in \Z$ in $x \in X$ is \emph{good} if there exists an occurrence of $w_\kappa$ at $i-j$ in $\phi(x)$; otherwise $v_n$ is called \emph{bad}. Note that this copy of $w_\kappa$ in $\phi(x)$ must be contained in a (unique) copy of $w_n$ beginning at $i-j-\rho_{x,i}$ for some $0 \leq \rho_{x,i} \leq |w_n| - |w_\kappa|$. We have that \(x\) has a good occurrence of $v_n$ at $j$ if and only if \(x \in E_{v_n,j} \cap \phi^{-1}(E_{w_\kappa, 0}).\)

Moreover, for each $i \in \Z$, $x$ has a good occurrence of $v_n$ at $i+j$ if and only if \(T^i(x) \in E_{v_n,j} \cap \phi^{-1}(E_{w_\kappa, 0})\). By Hopf's Ratio Ergodic Theorem, we conclude that for a.e. $x \in X$,
	\[\lim_{N \to \infty} \frac{ \#|\{\text{good occurrences of $v_n$ in $x\lb0,N\rb$\}}|}{\#|\{\text{occurrences of $v_n$ in $x\lb0,N\rb$}\}|} > 1 - \frac{1}{\mathfrak R}.\]

Say that an occurrence of $v_m = v_{n_{\ell+1}}$ at $i \in \Z$ in $x \in X$ is \emph{totally good} if all the $v_n = v_{n_\ell}$ composing it are good. By Inequality \ref{limit}, almost every $x \in X$ must contain a totally good occurrence of $v_m$.

\begin{clm}
	Suppose the string $v_n 1^a v_n$ occurs at $i \in \Z$ in $x \in X$ and that the first $v_n$ at $i$ is good. We have a string of the form $w_n 1^b w_n$ occurring at $i-j-\rho_{x,i}$ in $\phi(x)$. Then the next copy of $v_n$ at $i+|v_n|+a$ is good if and only if $a = b$.
\end{clm}

The proof of the claim is similar to the proof of Proposition~\ref{a=b} and we leave it to the reader.

Let $x \in X$ contain a totally good occurrence of \[v_m = v_n 1^{t_\ell(0)} v_n 1^{t_\ell(1)} \cdots 1^{t_\ell(q_\ell-2)} v_n.\] By repeatedly applying the claim, we conclude that there must be a string of the form
\[ w_n 1^{t_\ell(0)} w_n 1^{t_\ell(1)} \cdots 1^{t_\ell(q_\ell-2)} w_n \]
in $\phi(x)$. But this contradicts the assumption that $t_\ell$ and $t_\ell'$  are incompatible, by Proposition \ref{incompatible}.
\end{proof}


We shall need the following lemma, which appears in \cite{Hi16} as Lemma 2.2. Given two finite sequences $s_1 = (s_1(0), \dots, s_1(r_1-2))$ and $s_2 = (s_2(0), \dots, s_2(r_2-2))$, we define $s_2 * s_1$ to be the following sequence of length $r_1 r_2 - 1$:
\[ s_2 * s_1 := s_1 s_2(0) s_1 s_2(1) \cdots s_2(r_2-2) s_1 .\]
Note that $s_2 * s_1$ is just the spacer parameter term obtained by replacing two consecutive stages in the construction of a rank-one transformation by a single one. It is easy to check that $*$ is an associative operation.

\begin{lem} \label{lemma}
Let $s_1 = (s_1(0), \dots, s_1(r_1-2))$ and $s_2 = (s_2(0), \dots, s_2(r_2-2))$. Suppose that $s_1 \ne \overline{s_1}$ and that $s_2$ is not constant. Then $s_2 * s_1$ and $ \overline{s_2 * s_1}$ are incompatible.
\end{lem}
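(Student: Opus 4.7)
The plan is to analyze, for arbitrary integer $c$, the rigid block-and-separator structure of $\overline{s_2 * s_1} \, c \, \overline{s_2 * s_1}$ and show that no copy of $s_2 * s_1$ can sit inside it consistently with both hypotheses. First I would set $r_i - 1 = |s_i|$, abbreviate $\sigma := s_2 * s_1$, and record the easy identity $\overline{\sigma} = \overline{s_2} * \overline{s_1}$, obtained by reading the definition of $*$ backwards and using $\overline{s_2}(i) = s_2(r_2 - 2 - i)$. Note that $s_1 \neq \overline{s_1}$ forces $r_1 \geq 3$ and $s_2$ non-constant forces $r_2 \geq 3$, so $\sigma$ contains at least two $s_1$-blocks and at least two $s_2$-separators.

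Next I would lay out the structure of $\overline{\sigma} \, c \, \overline{\sigma}$, which has length $2 r_1 r_2 - 1$. For $j = 0, 1, \ldots, 2 r_2 - 1$, the positions $j r_1, j r_1 + 1, \ldots, j r_1 + r_1 - 2$ form the $j$-th $\overline{s_1}$-block, while the intervening positions $(j+1) r_1 - 1$ (for $j = 0, 1, \ldots, 2 r_2 - 2$) carry the $2 r_2 - 1$ single separators; reading left to right these are $s_2(r_2 - 2), s_2(r_2 - 3), \ldots, s_2(0), c, s_2(r_2 - 2), \ldots, s_2(0)$, with $c$ at the middle position $r_2 r_1 - 1$.

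Assume toward contradiction that $\sigma$ occurs at some position $p \in \{0, 1, \ldots, r_1 r_2\}$ in $\overline{\sigma} \, c \, \overline{\sigma}$, and split on $p \bmod r_1$. If $p = k_0 r_1$, then the $r_2$ copies of $s_1$ inside $\sigma$ fall exactly into the consecutive $\overline{s_1}$-blocks indexed $k_0, k_0 + 1, \ldots, k_0 + r_2 - 1$, forcing $s_1 = \overline{s_1}$ and contradicting the first hypothesis. Otherwise $p = k_0 r_1 + r$ with $1 \leq r \leq r_1 - 1$, and the $(k-1)$-th separator $s_2(k-1)$ of $\sigma$ lands at position $(k_0 + k) r_1 + (r - 1)$ for each $k = 1, \ldots, r_2 - 1$. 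Since $0 \leq r - 1 \leq r_1 - 2$, every such position lies strictly inside the $(k_0 + k)$-th $\overline{s_1}$-block and in particular avoids the $c$-position $r_2 r_1 - 1$. Hence each of these separators equals $\overline{s_1}(r - 1)$, so $s_2(0) = s_2(1) = \cdots = s_2(r_2 - 2)$, contradicting the non-constancy of $s_2$.

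The main obstacle is the bookkeeping: one must describe the block/separator positions in $\overline{\sigma} \, c \, \overline{\sigma}$ precisely and verify that the exceptional $c$-slot never coincides with a separator of a hypothetical embedded copy of $\sigma$. Once that accounting is in place, the two cases correspond exactly to the failure of the two hypotheses, confirming that both are genuinely needed for the incompatibility conclusion.
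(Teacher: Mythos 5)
Your argument is correct and complete. Note, however, that the paper itself gives no proof of this lemma: it is imported verbatim from \cite{Hi16} (Lemma 2.2 there), so there is nothing in this paper to compare your proof against. On its own terms your write-up holds up: the identity $\overline{s_2*s_1}=\overline{s_2}*\overline{s_1}$ is right, the position bookkeeping in $\overline{\sigma}\,c\,\overline{\sigma}$ is accurate (blocks at $jr_1,\dots,jr_1+r_1-2$ for $0\le j\le 2r_2-1$, separators at $(j+1)r_1-1$, with $c$ at $r_1r_2-1$), and the residue-class split on $p\bmod r_1$ cleanly separates the two failure modes: an aligned occurrence forces $s_1=\overline{s_1}$, while a misaligned one places every separator of $\sigma$ strictly inside an $\overline{s_1}$-block at a fixed offset $r-1\le r_1-2$, forcing $s_2$ constant. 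The one point worth making explicit is the range check $k_0+k\le 2r_2-2$ in the misaligned case (so that the relevant block actually exists); this follows from $p\le r_1r_2$ and $r\ge 1$, which give $k_0\le r_2-1$. Also observe that in the misaligned case you never need the observation that the $c$-slot is avoided as a separate step --- it is automatic because the $c$-slot sits in residue class $r_1-1$ while your separators sit in class $r-1\le r_1-2$; you say this, and it is the crux of why a single exceptional symbol $c$ cannot rescue compatibility. The preliminary remarks that $r_1\ge 3$ and $r_2\ge 3$ are true but not actually used anywhere in the two cases, so they could be dropped without loss.
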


We are now ready to prove Theorem \ref{theorem}. The backwards direction is easy: observe that for any rank-one system $(X, \mu, T)$ with parameters $(r_n)$ and $(s_n)$, $(X, \mu, T^{-1})$ is isomorphic to the rank-one system $(\overline{X}, \overline{\mu}, T)$ with parameters $(r_n)$ and $(\overline{s_n})$. Thus if the condition in Theorem \ref{theorem} holds, then a possible isomorphism $\phi$ between $(X, \mu, T)$ and $(\overline{X}, \overline{\mu}, T)$ is the one where $\phi(x)$ is obtained by replacing every expected occurrence of $v_N$ in $x$ by an occurrence of $v_N'$. This is an example of the \emph{stable isomorphisms} discussed in \cite{GaHi16}, and is in particular a topological isomorphism (which also gives an  isomorphism of the Borel systems). Clearly, it is also a finitary isomorphism.

We establish the forward direction of the theorem below.

\begin{proof}[Proof of Theorem \ref{theorem}]
Let $(X, \mu, T)$ be a partially bounded rank-one system with parameters $(r_n)$ and $(s_n)$ and associated rank-one words $(v_n)$. Suppose that $s_n \ne \overline{s_n}$ for infinitely many $n$. Let $(Y, \nu,T)$ be the rank-one system with parameters $(r_n)$ and $(\overline{s_n})$, which is isomorphic to the inverse of $(X, \mu, T)$. We wish to apply Proposition \ref{prop} to show that $X$ and $Y$ are not isomorphic. The only nontrivial condition to check is the existence of a subsequence of $(u_\ell) = (v_{n_\ell})$ of $(v_n)$ satisfying the criteria in condition (4), which we do below.

Let $u_0 = v_0 = 0$. Suppose that $u_{2m}$ has been defined as $v_k$. Let $n > k$ such that $s_n \ne \overline{s_n}$. Define $u_{2m+1} = v_n$ and $u_{2m+2} = v_{n+3}$. In this way, we have obtained a new sequence of rank-one words $(u_\ell)$ of $(X, \mu,T)$; let $(q_\ell)$ and $(t_\ell)$ be the new cutting and spacer parameters. We check that there exists $\mathfrak{Q} > 0$ and an infinite set $\mathbb{M} \subset \N$ such that $q_\ell < \mathfrak{Q}$ and $t_\ell$ and $ \overline{t_\ell}$ are incompatible for all $\ell \in \mathbb{M}$. Let $\mathbb{M}$ be the set of odd positive integers. Then for all $\ell \in \mathbb{M}$ we have:
\begin{itemize}
\item $q_\ell = r_{n+2} \cdot r_{n+1} \cdot r_n < \mathfrak R^3$,
\item $t_\ell = (s_{n+2} * s_{n+1}) * s_n$.
\end{itemize}
We can take $\mathfrak{Q} = \mathfrak R^3$. Moreover, using condition (3) in the definition of partial boundedness, we see that $s_{n+2} * s_{n+1}$ is not constant. Since $s_n \ne \overline{s_n}$, Lemma \ref{lemma} implies that \((s_{n+2} * s_{n+1}) * s_n\) and \(\overline{(s_{n+2} * s_{n+1}) * s_n}\) are incompatible. In other words, $t_\ell$ and  $ \overline{t_\ell}$ are incompatible, which concludes the proof.
\end{proof}

\bibliographystyle{plain}
\bibliography{ErgodicBibMaster-Cen-Apr29}

\end{document}